\theoremstyle{plain}
\newtheorem{theorem}{Theorem}
\newtheorem{lemma}[theorem]{Lemma}
\newtheorem{corollary}[theorem]{Corollary}
\theoremstyle{definition}
\newtheorem{definition}[theorem]{Definition}
\theoremstyle{remark}
\begin{document}

\title{On the polymatroid Tutte polynomial}
\author{Xiaxia Guan\\
\small School of Mathematical Sciences\\[-0.8ex]
\small Xiamen University\\[-0.8ex]
\small P. R. China\\
{\small\tt Email:gxx0544@126.com}\\
Weiling Yang\\
\small School of Mathematical Sciences\\[-0.8ex]
\small Xiamen University\\[-0.8ex]
\small P. R. China\\
{\small\tt Email:ywlxmu@163.com}\\
Xian'an Jin\footnote{Corresponding author}\\
\small School of Mathematical Sciences\\[-0.8ex]
\small Xiamen University\\[-0.8ex]
\small P. R. China\\
{\small\tt Email:xajin@xmu.edu.cn}
}
\begin{abstract}
The Tutte polynomial is a well-studied invariant of  matroids. The polymatroid Tutte polynomial $\mathcal{J}_{P}(x,y)$, introduced by Bernardi et al., is an extension of the classical Tutte polynomial from matroids to polymatroids $P$. In this paper, we first prove that $\mathcal{J}_{P}(x,t)$ and $\mathcal{J}_{P}(t,y)$ are interpolating for any fixed real number $t\geq 1$, and then we study the coefficients of high-order terms in $\mathcal{J}_{P}(x,1)$ and $\mathcal{J}_{P}(1,y)$. These results generalize results on interior and exterior polynomials of hypergraphs.
\end{abstract}

\begin{keyword}
Interpolating behavior\sep Tutte polynomial\sep Polymatroid\sep High-order term
\MSC 05C31\sep 05B35\sep 05C65
\end{keyword}
\maketitle

\section{Introduction}
\noindent

The Tutte polynomial \cite{Tutte} is an important and well-studied branch of graph and matroid theory, having wide applications in statistical physics  and knot theory and so on.  As a generalization of the valuations $T_G(x,1)$ and $T_G(1,y)$ in the Tutte polynomial $T_G(x,y)$ of graphs $G$ to hypergraphs, K\'{a}lm\'{a}n \cite{Kalman1} introduced the interior polynomial $I_{\mathcal{H}}(x)$ and the exterior polynomial $X_{\mathcal{H}}(y)$ of hypergraphs $\mathcal{H}$, respectively.  Later, K\'{a}lm\'{a}n and Postnikov \cite{Kalman2,Kalman3} established a relation between the top of the HOMFLY polynomial \cite{Freyd,Hom2}, which is a generalization of the celebrated Jones polynomial \cite{Jones} in knot theory, of any special alternating link and the interior polynomial of the pair of hypergraphs coming from the Seifert graph (which is a bipartite graph) of the link.

Polymatroids, firstly introduced by Edmonds \cite{Edmonds}, are a generalization of matroids and an abstraction of hypergraphs. In this paper, we only consider integer polymatroids, and simply call them polymatroids. In \cite{Bernardi}, Bernardi et al. proposed the polymatroid Tutte polynomial $\mathcal{J}_{P}(x,y)$, which is an extension of the classical Tutte polynomial from matroids to polymatroids. It is easy to see that for a hypergraphical polymatroid $P$ with ground set $[n]=\{1,\ldots,n\}$, the polynomial $\mathcal{J}_{P}(x,y)$ contains both $I_{P}(x)$ and  $X_{P}(y)$, as special case in the sense that
 $\mathcal{J}_{P}(x,1)=x^{n}I_{P}(\frac{1}{x})$ and $\mathcal{J}_{P}(1,y)=y^{n}X_{P}(\frac{1}{y})$, respectively. And they showed that $\mathcal{J}_{P}(x,y)$ is $S_{n}$-invariant by finding the relation between $\mathcal{J}_{P}(x,y)$ and $\widetilde{\mathcal{J}}_{P}(x,y)$, a polymatroid polynomial defined similarly as the corank-nullity definition of Tutte polynomial of matroids. They also proved that $\mathcal{L}'_{P}(x,y)=\frac{\mathcal{J}_{P}(x,y)}{x+y-1}$, where $\mathcal{L}'_{P}(x,y)$ is a two-variable invariant of a polymatroid $P$ defined by Cameron and Fink  in \cite{Cameron}.

In this paper, we first prove that $\mathcal{J}_{P}(x,t)$ and $\mathcal{J}_{P}(t,y)$ are interpolating for any real number $t\geq 1$, which extends interpolatory property of $I_{P}(x)$ and  $X_{P}(y)$ of a hypergraphical polymatroid $P$ in \cite{Guan}.
In \cite{Kalman1,Guan}, the coefficients of the linear and constant terms  in $I_{P}(x)$ and  $X_{P}(y)$ of a hypergraphical polymatroid $P$, have been studied. In this paper, as generalization, we shall study the coefficients of terms of degree $(n-1)$ and $n$ in $\mathcal{J}_{P}(x,1)$ and $\mathcal{J}_{P}(1,y)$ for a polymatroid $P$ with ground set $[n]$.

\section{Preliminaries}
\noindent

In this section, we will give some definitions and summarise some known results used in the next sections.

We first recall the definition of polymatroid given by Edmonds \cite{Edmonds}. Throughout the paper, we let  $2^{[n]}=\{I|I\subset [n]\}$, and $\textbf{e}_{1},\textbf{e}_{2},\ldots,\textbf{e}_{n}$ denotes the canonical basis of $\mathbb{R}^{n}$.
\begin{definition}  \cite{Edmonds} \label{def polymatroid}
Let  $f:2^{[n]}\rightarrow \mathbb{R}_{\geq 0}$ be a function, satisfying

\begin{enumerate}
\item[(1)] $f(\emptyset)=0$;

\item[(2)] $f(I)\leq f(J)$ for any $I\subset J\subset [n]$;

\item[(3)] $f(I)+f(J)\geq f(I\cup J)+f(I\cap J)$ for any $I,J\subset [n]$.
\end{enumerate}

\end{definition}
That is, $f$ is a monotone nondecreasing submodular function with $f(\emptyset)=0$. The pair $P=([n],f)$ is called a \emph{polymatroid}. Moreover, $\textbf{a}=(a_{1},a_{2},\cdots,a_{n})$ is called a \emph{basis} of the polymatroid $P$ with ground set $[n]$ and rank function $f$, if $\textbf{a}\in P$  and $\sum_{i\in [n]}a_{i}=f([n])$.
We denote the set of all bases in the polymatroid $P$ with $PB=\{\textbf{a}\in \mathbb{R}_{\geq 0}^{n}|\textbf{a}\in P \ \text{and} \ \sum_{i\in [n]}a_{i}=f([n])\}$.

In fact, Edmonds \cite{Edmonds} gave the following characterization: Polymatroids $P\subset \mathbb{R}^{n}$ correspond bijectively to submodular functions $f:2^{[n]}\rightarrow \mathbb{R}_{\geq 0}$ with $f(\emptyset)=0$. Let $f:2^{[n]}\rightarrow \mathbb{R}_{\geq 0}$ be a submodular function with $f(\emptyset)=0$, the associated polymatroid $P=P_{f}$ is given by
$$P=P_{f}=\left\{\textbf{x}\in \mathbb{R}_{\geq 0}^{n}\bigg|\sum_{i\in I}x_{i}\leq f(I) \ \text{for any subset}\ I\subset [n]\right\}.$$ Furthermore, the set of bases of the polymatroid $P$ is given by
$$PB=\left\{\textbf{x}\in \mathbb{R}_{\geq 0}^{n}\bigg|\sum_{i\in I}x_{i}\leq f(I) \ \text{for any subset}\ I\subset [n], \text{and} \sum_{i\in [n]}x_{i}=f([n])\right\}.$$ Conversely, suppose that $P$ is a polymatroid, then its rank function $f=f_{P}$ of the polymatroid $P$, defined by
$$f(I)=\max_{\textbf{a}\in P}\sum_{i\in I} a_{i},  \ \text{for any subset}\ I\subset [n],$$
is a submodular function.

In this paper, we only consider \emph{integer polymatroid}, and simply call them \emph{polymatroid}. In \cite{Herzog}, we know that an integer polymatroid $P$ on the ground set $[n]$ is a non-empty subset of $\mathbb{Z}_{\geq}^{n}$ satisfying the following axiom:

\emph{Basis Exchange Axiom}. For any $\textbf{a}=(a_{1},a_{2},\cdots,a_{n})$, $\textbf{b}=(b_{1},b_{2},\cdots,b_{n})$ and $\textbf{a}, \textbf{b}\in PB$, and any $i\in [n]$ such that $a_{i}>b_{i}$, there exists an index $j\in [n]$ such that $a_{j}<b_{j}$ and $\textbf{a}+\textbf{e}_{j}-\textbf{e}_{i}\in PB$, $\textbf{b}+\textbf{e}_{i}-\textbf{e}_{j}\in PB$.

It is easy to see that any matroid $M$ yields a polymatroid, and adding the same vector to each element (viewed as an $n$-dimensional vector of $\mathbb{R}^{n}$) of a polymatroid yields another polymatroid.

It is easy to verify that the set of all  hypertrees in a hypergraph $\mathcal{H}=(V,E)$ is the set of bases in a polymatroid with rank function $f:2^{E}\rightarrow \mathbb{Z}_{\geq}$, which is exactly the $\mu$-function defined by Kalman\cite{Kalman1} for any subset $E'$ of hyperedges $E$. In other words, polymatroids are an abstraction of hypergraphs. A polymatroid is called \emph{hypergraphical polymatroid} $P_{\mathcal{H}}$ if its bases are the set of all hypertrees for some hypergraph $\mathcal{H}$.

Moreover, integer polymatroids introduced by  Edmonds can be generalized to any integer, that is, a function $f:2^{[n]}\rightarrow \mathbb{Z}$ satisfies three conditions in Definition \ref{def polymatroid}.

Bernardi et al.  \cite{Bernardi} introduced  (internal and external) activity of a basis for  polymatroids and hence, the polymatroid Tutte polynomial as follows.
\begin{definition}\cite{Bernardi}
Let $P\subset \mathbb{Z}^{n}$ be a polymatroid  with  with ground set $[n]$ and rank function $f$.

For a basis $\textbf{a}\in PB$, an index $i\in [n]$ is \emph{internally active} if $\textbf{a}-\textbf{e}_{i}+\textbf{e}_{j}\notin PB$ for any $j<i$. Let $Int(\textbf{a})=Int_{P}(\textbf{a})\subset [n]$ denote the set of
internally active indices with respect to $\textbf{a}$ and $\iota(\textbf{a})=|Int(\textbf{a})|$.

For a basis $\textbf{a}\in PB$, an index $i\in [n]$ is \emph{externally active} if $\textbf{a}+\textbf{e}_{i}-\textbf{e}_{j}\notin PB$ for any $j<i$. Let $Ext(\textbf{a})=Ext_{P}(\textbf{a})\subset [n]$ denote the set of externally active indices with respect to $\textbf{a}$ and $\epsilon(\textbf{a})=|Ext(\textbf{a})|$.
\end{definition}

\begin{definition}\cite{Bernardi}
Let $P\subset \mathbb{Z}^{n}$ be a polymatroid  with ground set $[n]$ and rank function $f$. The \emph{polymatroid Tutte polynomial} $\mathcal{J}_{P}(x,y)$ is defined as
$$\mathcal{J}_{P}(x,y)=\sum_{\textbf{a}\in PB}x^{oi(\textbf{a})}y^{oe(\textbf{a})}(x+y-1)^{ie(\textbf{a})}$$
where $$oi(\textbf{a})=|Int(\textbf{a})\setminus Ext(\textbf{a})|,$$ $$oe(\textbf{a})=|Ext(\textbf{a})\setminus Int(\textbf{a})|,$$ $$ie(\textbf{a})=|Int(\textbf{a})\cap Ext(\textbf{a})|.$$
\end{definition}
Note that the smallest index $i=1$ must be simultaneously internally and externally active  for any basis of any polymatroid. Then we have that the polynomial $\mathcal{J}_{P}(x,y)$ is always divisible by $(x+y-1)$.
It is easy to verify that if $P_{\mathcal{H}}$ is a hypergraphical polymatroid with ground set $[n]$, then the polynomial $\mathcal{J}_{P_{\mathcal{H}}}(x,y)$ is a generalization of both the interior polynomial $I_{P_{\mathcal{H}}}(x)$ and the exterior polynomial $X_{P_{\mathcal{H}}}(y)$, introduced by K\'{a}lm\'{a}n in \cite{Kalman1}. In fact, we have,
 $\mathcal{J}_{P_{\mathcal{H}}}(x,1)=x^{n}I_{P_{\mathcal{H}}}(\frac{1}{x})$ and $\mathcal{J}_{P_{\mathcal{H}}}(1,y)=y^{n}X_{P_{\mathcal{H}}}(\frac{1}{y})$, respectively.

In \cite{Bernardi}, Bernardi et al. proved that the polynomial $\mathcal{J}_{P}(x,y)$ is translation invariant and $S_{n}$-invariant. And they also found that the polymatroid Tutte polynomial can reduce to the classical Tutte polynomial $T_{M}(x,y)$ of a matroid $M$ as follows.
If $M\subset 2^{[n]}$ is a matroid of rank $d$ on the ground set $[n]$, and  $P=P(M)\subset \{0,1\}^{n}$ is the corresponding polymatroid, then $\mathcal{J}_{P}(x,y)=x^{n-d}y^{d}T_{M}(\frac{x+y-1}{y},\frac{x+y-1}{x})$.

The \emph{dual} polymatroid  of  a polymatroid $P$ is denoted by $-P$, whose the set of its bases
$$-PB:=\{(-a_{1},\ldots,-a_{n})|(a_{1},\ldots,a_{n})\in PB\}.$$

It is easy to see the relation between the Tutte polynomial of a polymatroid and its dual as follows.
\begin{theorem}\cite{Bernardi}\label{thm dual}
Let $P\subset \mathbb{Z}^{n}$ be a polymatroid and $-P$ be its dual polymatroid.  Then $\mathcal{J}_{P}(x,y)=\mathcal{J}_{-P}(y,x)$.
\end{theorem}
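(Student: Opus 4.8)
The plan is to exploit the evident bijection $\textbf{a}\mapsto -\textbf{a}$ between $PB$ and $-PB$ and show that it interchanges internal and external activity while fixing the index set $\{j:j<i\}$ used in both notions. First I would note (or take as given, since the statement already speaks of the ``dual polymatroid'') that $-P$ is indeed a polymatroid in the generalized integer sense: if $\textbf{a},\textbf{b}\in -PB$ and $a_i>b_i$, then $-\textbf{a},-\textbf{b}\in PB$ with $(-\textbf{b})_i>(-\textbf{a})_i$, so applying the Basis Exchange Axiom to the ordered pair $(-\textbf{b},-\textbf{a})$ and the index $i$ yields $j$ with $(-\textbf{b})_j<(-\textbf{a})_j$, $-\textbf{b}+\textbf{e}_j-\textbf{e}_i\in PB$ and $-\textbf{a}+\textbf{e}_i-\textbf{e}_j\in PB$; negating everything gives $a_j<b_j$, $\textbf{b}+\textbf{e}_i-\textbf{e}_j\in -PB$ and $\textbf{a}+\textbf{e}_j-\textbf{e}_i\in -PB$, which is the axiom for $-PB$.

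The heart of the argument is the translation of activities. Fix $\textbf{a}\in PB$ and $i\in[n]$. For any $j$ we have $\textbf{a}+\textbf{e}_i-\textbf{e}_j\in PB$ if and only if $-(\textbf{a}+\textbf{e}_i-\textbf{e}_j)=-\textbf{a}-\textbf{e}_i+\textbf{e}_j\in -PB$; hence $i$ is externally active for $\textbf{a}$ in $P$ (i.e.\ $\textbf{a}+\textbf{e}_i-\textbf{e}_j\notin PB$ for all $j<i$) exactly when $-\textbf{a}-\textbf{e}_i+\textbf{e}_j\notin -PB$ for all $j<i$, i.e.\ when $i$ is internally active for $-\textbf{a}$ in $-P$. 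Thus $Ext_P(\textbf{a})=Int_{-P}(-\textbf{a})$. Symmetrically, $\textbf{a}-\textbf{e}_i+\textbf{e}_j\in PB \iff -\textbf{a}+\textbf{e}_i-\textbf{e}_j\in -PB$, which gives $Int_P(\textbf{a})=Ext_{-P}(-\textbf{a})$. Consequently $oi_{-P}(-\textbf{a})=|Int_{-P}(-\textbf{a})\setminus Ext_{-P}(-\textbf{a})|=|Ext_P(\textbf{a})\setminus Int_P(\textbf{a})|=oe_P(\textbf{a})$, and likewise $oe_{-P}(-\textbf{a})=oi_P(\textbf{a})$ and $ie_{-P}(-\textbf{a})=ie_P(\textbf{a})$.

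Finally I would substitute into the defining sum, using the bijection $\textbf{b}=-\textbf{a}$:
$$\mathcal{J}_{-P}(y,x)=\sum_{\textbf{b}\in -PB}y^{oi(\textbf{b})}x^{oe(\textbf{b})}(y+x-1)^{ie(\textbf{b})}=\sum_{\textbf{a}\in PB}y^{oe_P(\textbf{a})}x^{oi_P(\textbf{a})}(x+y-1)^{ie_P(\textbf{a})}=\mathcal{J}_{P}(x,y).$$
I do not expect a genuine obstacle here; the only point requiring care is the sign bookkeeping in the activity translation, where one must keep straight that negation turns $\textbf{a}\pm\textbf{e}_i\mp\textbf{e}_j$ into $-\textbf{a}\mp\textbf{e}_i\pm\textbf{e}_j$ while leaving the comparison $j<i$ completely unaffected, so external activity in $P$ becomes internal activity in $-P$ and vice versa.
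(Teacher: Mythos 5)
Your proof is correct, and it is the standard argument for this duality. Note, however, that the paper does not actually prove Theorem~\ref{thm dual}: it simply states it with the remark ``It is easy to see\ldots'' and cites \cite{Bernardi}. What you have written out is precisely the verification that remark alludes to: the bijection $\textbf{a}\mapsto-\textbf{a}$ between $PB$ and $-PB$ interchanges internal and external activity (because negating the perturbation $\textbf{a}\pm\textbf{e}_i\mp\textbf{e}_j$ flips the signs of $\textbf{e}_i$ and $\textbf{e}_j$ while leaving the comparison $j<i$ untouched), which yields $oi\leftrightarrow oe$ and fixes $ie$, and then reindexing the defining sum finishes the job. Your preliminary check that $-PB$ satisfies the Basis Exchange Axiom is a sound extra, though it is already implicit in the paper's definition of ``dual polymatroid.'' There is no gap; your argument is exactly what one would expect the cited reference to contain.
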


In the end of this section, we will introduce the definition of an interpolating polynomial.
\begin{definition}
The \emph{support} of a polynomial $f(x)=\sum\limits_{i=0}^{m}a_{i}x^{i}$ is the set $supp(f)=\{i|a_{i}\neq 0\}$ of  indices of the non-zero coefficients. The polynomial $f$ is called \emph{interpolating} if its $supp(f)$ is an integer interval $[n_1, n_2]$
of all integers from $n_1$ to $n_2$, inclusive.
\end{definition}

\section{Interpolating behavior of $\mathcal{J}_{P}(x,t)$ and $\mathcal{J}_{P}(t,y)$}
\noindent

In this section, we will prove that both $\mathcal{J}_{P}(x,t)$ and $\mathcal{J}_{P}(t,y)$ are interpolating for any fixed real number $t\geq 1$. By the duality theorem, we only consider  $\mathcal{J}_{P}(x,t)$.  We first  prove the special case $t=1$, then prove the general case.

Let $P\subset \mathbb{Z}^{n}$ be a polymatroid  with ground set $[n]$ and rank function $f$. For a basis $\textbf{a}\in PB$, let
$$\mathcal{I}(\textbf{a})=\mathcal{I}_{P}(\textbf{a})=\left\{I\subset [n]\bigg| \sum_{i\in I}a_{i}= f(I)\right\}$$
be the set of tight set for $\textbf{a}$.

The next Theorem holds from Theorem 44.2 in \cite{Schrijver} since $f$ is submodular.

\begin{lemma}\label{tight}
Let $P\subset \mathbb{Z}^{n}$ be a polymatroid  with ground set $[n]$ and rank function $f$. For any basis $\textbf{a}\in PB$, if $I,J\in \mathcal{I}(\textbf{a})$, then $I\cup J,I\cap J\in \mathcal{I}(\textbf{a})$.
\end{lemma}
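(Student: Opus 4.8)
The plan is to run the classical submodular ``uncrossing'' argument, using nothing more than condition (3) of Definition \ref{def polymatroid} together with the defining inequalities of the polymatroid $P=P_f$.

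First I would collect the ingredients. Since $\textbf{a}\in PB\subset P$, for every $K\subset[n]$ we have $\sum_{i\in K}a_i\le f(K)$; and since $I,J\in\mathcal{I}(\textbf{a})$, the inequalities for $I$ and $J$ are equalities, namely $\sum_{i\in I}a_i=f(I)$ and $\sum_{i\in J}a_i=f(J)$. I would also record the trivial identity $\sum_{i\in I}a_i+\sum_{i\in J}a_i=\sum_{i\in I\cup J}a_i+\sum_{i\in I\cap J}a_i$, which holds for any vector $\textbf{a}$ because each coordinate contributes the same total on both sides.

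Next I would put these together in one chain:
\[
f(I)+f(J)\ \ge\ f(I\cup J)+f(I\cap J)\ \ge\ \sum_{i\in I\cup J}a_i+\sum_{i\in I\cap J}a_i\ =\ \sum_{i\in I}a_i+\sum_{i\in J}a_i\ =\ f(I)+f(J),
\]
where the first inequality is submodularity, the second applies the polymatroid bound to the sets $I\cup J$ and $I\cap J$, and the final equality is the tightness of $I$ and $J$. Since the two ends of the chain agree, every step is an equality; in particular $f(I\cup J)+f(I\cap J)=\sum_{i\in I\cup J}a_i+\sum_{i\in I\cap J}a_i$. Combined with the individual bounds $f(I\cup J)\ge\sum_{i\in I\cup J}a_i$ and $f(I\cap J)\ge\sum_{i\in I\cap J}a_i$, this forces both to be equalities, i.e. $I\cup J,I\cap J\in\mathcal{I}(\textbf{a})$.

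I do not anticipate any real obstacle: the whole content is the submodular uncrossing inequality, and the only point requiring a moment's care is the last deduction --- two reals, each no larger than its own bound but summing to exactly the sum of the bounds, must each equal its bound. (As the excerpt notes, one could instead simply invoke Theorem 44.2 of \cite{Schrijver}, but the argument above is short and self-contained.)
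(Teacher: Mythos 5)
Your proof is correct. The paper does not actually give a proof of this lemma: it simply observes that the statement follows from Theorem~44.2 of Schrijver's book, which records exactly this fact about tight sets of a submodular function. What you have written out is the standard uncrossing argument that underlies that cited theorem, so in substance you have reproduced the omitted proof rather than found a different route. The chain $f(I)+f(J)\geq f(I\cup J)+f(I\cap J)\geq \sum_{i\in I\cup J}a_{i}+\sum_{i\in I\cap J}a_{i}=\sum_{i\in I}a_{i}+\sum_{i\in J}a_{i}=f(I)+f(J)$ does force all inequalities to be equalities, and the final step (two terms, each bounded above, with the sum of the bounds attained, must each attain its bound) is sound. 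One small point worth noting so that the argument matches the paper's level of generality: the bound $\sum_{i\in K}a_{i}\leq f(K)$ for every $K\subset[n]$ is available even when $P\subset\mathbb{Z}^{n}$ is an arbitrary integer polymatroid with $f$ possibly taking negative values, because the rank function is defined precisely as $f(K)=\max_{\mathbf{a}\in P}\sum_{i\in K}a_{i}$; so your appeal to ``the defining inequalities of $P=P_{f}$'' is justified and not tied to the nonnegative special case.
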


\begin{lemma}\label{transfer}
Let $P\subset \mathbb{Z}^{n}$ be a polymatroid  with ground set $[n]$ and rank function $f$, $\textbf{a}\in PB$, $i,j\in [n]$ and $i\neq j$. Then $\textbf{b}=\textbf{a}+\textbf{e}_{i}-\textbf{e}_{j}\in PB$  if and only if $I\notin \mathcal{I}(\textbf{a})$ for any $I$ satisfying $i\in I\subset [n]\setminus j$.
\end{lemma}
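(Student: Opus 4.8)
The plan is to verify membership in $PB$ directly from Edmonds' description, $P=\{\textbf{x}\in\mathbb{Z}_{\geq 0}^{n}:\sum_{k\in I}x_{k}\le f(I)\text{ for all }I\subset[n]\}$ together with the extra equality $\sum_{k\in[n]}x_{k}=f([n])$ for bases, rather than invoking the Basis Exchange Axiom. One implication is immediate by contraposition: if some $I$ with $i\in I\subset[n]\setminus j$ satisfies $\sum_{k\in I}a_{k}=f(I)$, i.e.\ $I\in\mathcal{I}(\textbf{a})$, then $\sum_{k\in I}b_{k}=\sum_{k\in I}a_{k}+1=f(I)+1>f(I)$, so $\textbf{b}\notin P$ and hence $\textbf{b}\notin PB$. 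Note this direction uses nothing but the definition.

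For the converse, assume no $I$ with $i\in I\subset[n]\setminus j$ lies in $\mathcal{I}(\textbf{a})$. First, $\sum_{k\in[n]}b_{k}=\sum_{k\in[n]}a_{k}=f([n])$, so the total weight is correct, and it remains to check $\textbf{b}\in\mathbb{Z}_{\geq 0}^{n}$ and all subset inequalities. The only coordinate that can go negative is $b_{j}=a_{j}-1$; if $a_{j}=0$ then $\sum_{k\in[n]\setminus j}a_{k}=f([n])\ge f([n]\setminus j)\ge\sum_{k\in[n]\setminus j}a_{k}$ by monotonicity and the basis constraint, forcing $[n]\setminus j\in\mathcal{I}(\textbf{a})$, and since $i\ne j$ gives $i\in[n]\setminus j$ this contradicts the hypothesis; hence $a_{j}\ge 1$ and $\textbf{b}\in\mathbb{Z}_{\geq 0}^{n}$. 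For a subset $S\subset[n]$ I would split into three cases. If $i\notin S$, then $\sum_{k\in S}b_{k}\le\sum_{k\in S}a_{k}\le f(S)$. If $i\in S$ and $j\in S$, then $\sum_{k\in S}b_{k}=\sum_{k\in S}a_{k}\le f(S)$. If $i\in S$ and $j\notin S$, then $i\in S\subset[n]\setminus j$, so by hypothesis $S\notin\mathcal{I}(\textbf{a})$, i.e.\ $\sum_{k\in S}a_{k}<f(S)$; as $f$ is integer-valued and $\textbf{a}$ is integral, $\sum_{k\in S}a_{k}\le f(S)-1$, whence $\sum_{k\in S}b_{k}=\sum_{k\in S}a_{k}+1\le f(S)$. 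Thus $\textbf{b}\in PB$.

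The argument is essentially bookkeeping, so the only genuinely delicate point is the nonnegativity of $b_{j}$: this is exactly where one must use that $\textbf{a}$ is a \emph{basis} and not merely a point of $P$, since the total-weight equality converts the failure $a_{j}=0$ into tightness of $[n]\setminus j$. Integrality of $f$ (the fact that we work with integer polymatroids) is also used essentially, to turn the strict inequality $\sum_{k\in S}a_{k}<f(S)$ into a gap of at least $1$ that absorbs $+\textbf{e}_{i}$; over the reals the statement would fail. I do not expect to need Lemma~\ref{tight} here — it is presumably invoked in the interpolation arguments that follow.
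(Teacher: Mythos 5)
Your proof is correct and follows essentially the same route as the paper: verify $\textbf{b}$ directly against the Edmonds description with a three-way case split on membership of $i,j$ in $S$, and use integrality of $f$ and $\textbf{a}$ to upgrade the strict inequality $\sum_{k\in S}a_k<f(S)$ to $\sum_{k\in S}a_k\le f(S)-1$ in the case $i\in S$, $j\notin S$. The one nominal difference is your nonnegativity check on $b_j$; since the paper works in $P\subset\mathbb{Z}^n$ (negative entries are allowed, as its remark on translation invariance shows), this step is superfluous here, though your argument for it is sound and would indeed be needed in the classical $\mathbb{Z}_{\geq 0}^n$ setting.
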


\begin{proof}
For sufficiency,  it is easy to see that $\sum_{i\in [n]}b_{i}=\sum_{i\in [n]}a_{i}=f([n])$. We divide three cases to discuss for any $I\subset [n]$.
\begin{enumerate}
\item[(1)] If $i,j\in I$ or $i,j\notin I$, then $$\sum_{i'\in I}b_{i'}=\sum_{i'\in I}a_{i'}\leq f(I).$$
\item[(2)] If $j\in I$ or $i\notin I$, then  $$\sum_{i'\in I}b_{i'}=(\sum_{i'\in I}a_{i'})-1< f(I).$$
\item[(3)] If $i\in I$ or $j\notin I$, then by known conditions, we have $$\sum_{i'\in I}b_{i'}=(\sum_{i'\in I}a_{i'})+1\leq f(I).$$
\end{enumerate}
Hence, $\sum_{i'\in I}b_{i'}\leq f(I)$ for any $I\subset [n]$, that is, $\textbf{b}=\textbf{a}+\textbf{e}_{i}-\textbf{e}_{j}\in PB$.

We prove the necessity by contradiction. Assume that there is a subset $I\subset [n]\setminus j$ and $i\in I$ such that $I\in \mathcal{I}(\textbf{a})$, that is, $\sum_{i'\in I}a_{i'}= f(I)$. Then $\sum_{i'\in I}b_{i'}=\sum_{i'\in I}a_{i'}+1> f(I)$ which contradicts the fact that $\textbf{b}\in PB$.
\end{proof}

By Lemma \ref{transfer}, we can obtain the necessary and sufficient condition that an index is internally or externally active for a basis of  polymatroids, which has been obtained in Lemma 4.2 in \cite{Bernardi}.

\begin{lemma} \cite{Bernardi}\label{ia}
Let $P\subset  \mathbb{Z}^{n}$ be a polymatroid  with ground set $[n]$ and rank function $f$. Let $\textbf{a}\in PB$ be a basis of  the polymatroid $P$. Then

\begin{itemize}
  \item [(1)] An index $i\in [n]$ is internally active with respect to $\textbf{a}$,
if and only if there exists a subset $I\subset [n]$ such that $i=min(I)$ and $[n]\setminus I\in \mathcal{I}(\textbf{a})$.
  \item [(2)] An index $i\in [n]$ is externally active with respect to $\textbf{a}$,
if and only if there exists a subset $I\subset [n]$ such that $i=min(I)$ and $I\in \mathcal{I}(\textbf{a})$.
\end{itemize}

\end{lemma}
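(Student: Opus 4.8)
The plan is to read both statements directly off Lemma~\ref{transfer}, which tells us exactly when a one-step exchange stays inside $PB$, and then to aggregate the tight sets obtained for the various $j<i$ using the closure of $\mathcal{I}(\textbf{a})$ under unions and intersections (Lemma~\ref{tight}). I begin with (2). By definition, $i$ is externally active iff $\textbf{a}+\textbf{e}_{i}-\textbf{e}_{j}\notin PB$ for every $j<i$; since $j<i$ forces $j\neq i$, the negated form of Lemma~\ref{transfer} says this is equivalent to: for each $j\in\{1,\dots,i-1\}$ there is a set $I_{j}\in\mathcal{I}(\textbf{a})$ with $i\in I_{j}$ and $j\notin I_{j}$. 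For the ($\Leftarrow$) direction, if some $I\in\mathcal{I}(\textbf{a})$ has $\min(I)=i$, then $I$ plays the role of every $I_{j}$, so $i$ is externally active. For ($\Rightarrow$), put $I:=\bigcap_{j=1}^{i-1}I_{j}$; Lemma~\ref{tight} gives $I\in\mathcal{I}(\textbf{a})$, while $i\in I$ and no $j<i$ lies in $I$, i.e. $\min(I)=i$. (When $i=1$ the condition is vacuous and one takes $I=[n]\in\mathcal{I}(\textbf{a})$, which has $\min([n])=1$ because $\textbf{a}$ is a basis.)

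Part (1) follows by the same argument after passing to complements. Here $i$ is internally active iff $\textbf{a}-\textbf{e}_{i}+\textbf{e}_{j}=\textbf{a}+\textbf{e}_{j}-\textbf{e}_{i}\notin PB$ for every $j<i$; applying Lemma~\ref{transfer} with the roles of $i$ and $j$ interchanged, this is equivalent to: for each $j\in\{1,\dots,i-1\}$ there is $J_{j}\in\mathcal{I}(\textbf{a})$ with $j\in J_{j}$ and $i\notin J_{j}$. Set $J:=\bigcup_{j=1}^{i-1}J_{j}$ (and $J:=\emptyset$ when $i=1$, which is tight since $f(\emptyset)=0$); by Lemma~\ref{tight}, $J\in\mathcal{I}(\textbf{a})$, with $\{1,\dots,i-1\}\subseteq J$ and $i\notin J$. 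Then $I:=[n]\setminus J$ satisfies $[n]\setminus I=J\in\mathcal{I}(\textbf{a})$ and $\min(I)=i$, which is exactly the asserted condition. Conversely, given such an $I$, the set $[n]\setminus I\in\mathcal{I}(\textbf{a})$ contains every $j<i$ and omits $i$, so by Lemma~\ref{transfer} each $\textbf{a}+\textbf{e}_{j}-\textbf{e}_{i}$ fails to lie in $PB$, and $i$ is internally active.

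There is no genuine obstacle here: the statement is essentially a repackaging of Lemma~\ref{transfer}. The one point demanding care is the bookkeeping — keeping track of whether $\textbf{e}_{i}-\textbf{e}_{j}$ or $\textbf{e}_{j}-\textbf{e}_{i}$ is the vector being added, so that Lemma~\ref{transfer} is invoked with the correct index in the role of ``$i$'', and noticing that taking complements is precisely what turns the intersection used for external activity into the union used for internal activity. Both aggregation steps are legitimate only because $\mathcal{I}(\textbf{a})$ is closed under both $\cup$ and $\cap$ (Lemma~\ref{tight}), so that is the ingredient one must not omit.
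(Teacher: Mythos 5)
Your proof is correct and follows precisely the route the paper indicates: the paper itself gives no written proof of Lemma~\ref{ia} (it is cited from \cite{Bernardi}), but immediately before stating it remarks that the characterization follows from Lemma~\ref{transfer}. Your argument makes that explicit, reducing both parts to Lemma~\ref{transfer} and then using Lemma~\ref{tight} to aggregate the per-$j$ tight sets into a single witness $I$ with $\min(I)=i$; the bookkeeping on which of $i,j$ is added and which subtracted, and the $i=1$ base cases via $[n]$ and $\emptyset$, are all handled correctly.
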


Now we let $P\subset \mathbb{Z}^{n}$ be a polymatroid  with ground set $[n]$ and rank function $f$, and  indices $i,j\in [n]$ and $i\neq j$. Let $\textbf{a},\textbf{b}$ be the bases of the polymatroid $P$, where $a_{t}=b_{t}$ for any $t\in [n]\setminus \{i,j\}$ and $a_{i}<b_{i}$. Then we have the following lemma.

\begin{lemma} \label{a to b}
 For any index $k>i$, if $k\in Int(\textbf{a})$, then  $k\in Int(\textbf{b})$.
\end{lemma}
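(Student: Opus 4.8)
The plan is to work entirely through the combinatorial characterization of internal activity in Lemma~\ref{ia}(1) and to track what happens to the tight sets of $\textbf{a}$ when we pass to $\textbf{b}$. The point is that a witnessing set $I$ for ``$k\in Int(\textbf{a})$'' will, under the hypotheses on $\textbf{a}$ and $\textbf{b}$, automatically be a witnessing set for ``$k\in Int(\textbf{b})$''.

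First I would record the bookkeeping. Since $\textbf{a},\textbf{b}\in PB$ we have $\sum_{t\in[n]}a_t=f([n])=\sum_{t\in[n]}b_t$, and since $a_t=b_t$ for every $t\in[n]\setminus\{i,j\}$ this forces $a_i+a_j=b_i+b_j$; combined with $a_i<b_i$ we get $b_i-a_i=a_j-b_j>0$. Now assume $k>i$ and $k\in Int(\textbf{a})$. By Lemma~\ref{ia}(1) there is a set $I\subseteq[n]$ with $k=\min(I)$ and $S:=[n]\setminus I\in\mathcal{I}(\textbf{a})$, i.e.\ $\sum_{t\in S}a_t=f(S)$. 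Because $k=\min(I)$ and $i<k$, the whole initial segment $\{1,\dots,k-1\}$ lies in $S$, so in particular $i\in S$. The crucial step is to show that $j\in S$ too: if $j\notin S$ then
\[
\sum_{t\in S}b_t=\sum_{t\in S}a_t+(b_i-a_i)=f(S)+(b_i-a_i)>f(S),
\]
contradicting $\textbf{b}\in PB$. Hence $j\in S$, and then $i,j\in S$ gives
\[
\sum_{t\in S}b_t=\sum_{t\in S}a_t+(b_i-a_i)+(b_j-a_j)=f(S),
\]
so $S\in\mathcal{I}(\textbf{b})$. Applying Lemma~\ref{ia}(1) in the reverse direction to the same $I$ (with $k=\min(I)$ and $[n]\setminus I=S\in\mathcal{I}(\textbf{b})$) yields $k\in Int(\textbf{b})$, as desired.

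The only genuine obstacle is the middle step, namely ruling out $j\notin S$; once one sees that $\textbf{a}$ cannot have a tight set containing $i$ but avoiding $j$, the rest is just the elementary observation that two bases differing only in coordinates $i$ and $j$ satisfy $b_i-a_i=a_j-b_j$. I do not expect any further complication: no submodularity beyond what is packaged in Lemma~\ref{ia} is needed, and the same index set $I$ serves as the activity witness before and after the exchange.
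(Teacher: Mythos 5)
Your proof is correct and follows essentially the same route as the paper: invoke Lemma~\ref{ia}(1) to get a witness set $I$ with $k=\min(I)$ and $S=[n]\setminus I$ tight for $\textbf{a}$, observe that $i\in S$ because $k>i$, and then show $S$ remains tight for $\textbf{b}$. The paper phrases the last step slightly more compactly as the squeeze $f(S)\ge\sum_{t\in S}b_t\ge\sum_{t\in S}a_t=f(S)$ (which in one line subsumes your case analysis ruling out $j\notin S$), but the content is identical.
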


\begin{proof}
If $k\in Int(\textbf{a})$, then there exists a subset $I\subset [n]$ such that $k=min(I)$ and $[n]\setminus I\in \mathcal{I}(\textbf{a})$ by Lemma \ref{ia} (1). It is clear that $i\in [n]\setminus I$ since $k>i$. Note that $f([n]\setminus I)\geq\sum_{i'\in [n]\setminus I}b_{i'}\geq\sum_{i'\in [n]\setminus I}a_{i'}=f([n]\setminus I)$. We have that $f([n]\setminus I)=\sum_{i'\in [n]\setminus I}b_{i'}$, that is, $[n]\setminus I\in \mathcal{I}(\textbf{b})$. Hence, $k\in Int(\textbf{b})$.
\end{proof}

\begin{corollary}\label{a and b}
Let $P\subset \mathbb{Z}^{n}$ be a polymatroid  with ground set $[n]$ and rank function $f$. Suppose that indices $i,j\in [n]$ and $i\neq j$, $\textbf{a},\textbf{b}\in PB$, $\textbf{a}\neq\textbf{b}$  and $a_{t}=b_{t}$ for any $t\in [n]\setminus \{i,j\}$. For any index $k>max\{i,j\}$, we have $k\in Int(\textbf{a})$ if and only if  $k\in Int(\textbf{b})$.
\end{corollary}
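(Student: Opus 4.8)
The plan is to deduce the corollary from Lemma~\ref{a to b} by applying it twice, once for each implication, with a suitable relabelling of the two distinguished indices. First I would record the basic constraint: since $\textbf{a},\textbf{b}\in PB$ we have $\sum_{t\in[n]}a_{t}=\sum_{t\in[n]}b_{t}=f([n])$, and since $a_{t}=b_{t}$ for every $t\in[n]\setminus\{i,j\}$ this forces $a_{i}+a_{j}=b_{i}+b_{j}$. Combined with the hypothesis $\textbf{a}\neq\textbf{b}$, it follows that $a_{i}\neq b_{i}$ and $a_{j}\neq b_{j}$, and moreover $a_{i}<b_{i}$ if and only if $a_{j}>b_{j}$. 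Because the statement to be proved and all its hypotheses are symmetric under exchanging $\textbf{a}$ and $\textbf{b}$, we may assume without loss of generality that $a_{i}<b_{i}$, hence also $b_{j}<a_{j}$.

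Now fix any index $k>\max\{i,j\}$. For the implication $k\in Int(\textbf{a})\Rightarrow k\in Int(\textbf{b})$, observe that $k>\max\{i,j\}\geq i$ and that $\textbf{a},\textbf{b}$ satisfy the hypotheses of Lemma~\ref{a to b} verbatim (they agree off $\{i,j\}$ and $a_{i}<b_{i}$); the lemma then yields $k\in Int(\textbf{b})$. For the reverse implication $k\in Int(\textbf{b})\Rightarrow k\in Int(\textbf{a})$, I would apply Lemma~\ref{a to b} with the roles of the two indices interchanged: take the distinguished pair to be $(j,i)$ in place of $(i,j)$, the first basis to be $\textbf{b}$ and the second to be $\textbf{a}$. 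This is legitimate since $\textbf{b}$ and $\textbf{a}$ agree off $\{i,j\}=\{j,i\}$ and $b_{j}<a_{j}$, and since $k>\max\{i,j\}\geq j$; the lemma then gives $k\in Int(\textbf{a})$. Combining the two implications proves the equivalence.

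There is no substantial obstacle here; the corollary is essentially a symmetrisation of Lemma~\ref{a to b}. The only point requiring care is the bookkeeping in the second application—checking that the hypotheses of Lemma~\ref{a to b} still hold after swapping which of $i,j$ plays the part of the ``smaller'' coordinate and which basis plays the part of $\textbf{a}$—and the initial observation that the basis-sum identity makes the strict inequalities $a_{i}<b_{i}$ and $b_{j}<a_{j}$ occur simultaneously, so that exactly one of the two labelings makes Lemma~\ref{a to b} applicable in each direction.
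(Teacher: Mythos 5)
Your proof is correct and takes the same route the paper does: the paper simply says the corollary ``follows directly from Lemma~\ref{a to b},'' and your argument supplies exactly the bookkeeping that remark elides, namely the basis-sum observation giving $a_{i}<b_{i}\Leftrightarrow b_{j}<a_{j}$ and the two applications of Lemma~\ref{a to b} with the roles of $(i,j)$ and $(\textbf{a},\textbf{b})$ swapped.
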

\begin{proof}
It follows directly from Lemma \ref{a to b}.
\end{proof}

Now we are in a position to prove the following key lemma of this section.

\begin{lemma}\label{lem main}
Let $P\subset \mathbb{Z}^{n}$ be a polymatroid  with ground set $[n]$ and rank function $f$. If there exists a basis of $P$ with internal activity $m$, then there also exists a basis of $P$ whose internal activity is $m+1$ for any integer $m<n$.
\end{lemma}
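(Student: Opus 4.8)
Given a basis $\textbf{a}$ with $\iota(\textbf{a})=m<n$, the plan is to build a finite chain of bases $\textbf{a}=\textbf{a}^{(0)},\textbf{a}^{(1)},\dots,\textbf{a}^{(T)}$, each obtained from the previous one by a single exchange, so that $Int(\textbf{a}^{(t)})\subseteq Int(\textbf{a}^{(t+1)})$, $\iota(\textbf{a}^{(t+1)})-\iota(\textbf{a}^{(t)})\in\{0,1\}$, and $\iota(\textbf{a}^{(T)})=n$. Since $\iota$ then climbs from $m$ to $n$ in increments of $0$ or $1$ and $m+1\le n$, some $\textbf{a}^{(t)}$ has $\iota(\textbf{a}^{(t)})=m+1$. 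Throughout I use the reformulation of activity coming from Lemma \ref{ia}(1) (the set $[n]\setminus I$ with $\min(I)=i$ is exactly a tight set containing $\{1,\dots,i-1\}$ but not $i$) together with Lemma \ref{tight}: writing $\mathrm{cl}_{\textbf{a}}(U)$ for the smallest tight set of $\textbf{a}$ containing $U$ — well defined, and equal to $\bigcup_{u\in U}\mathrm{cl}_{\textbf{a}}(\{u\})$ because tight sets are closed under unions — one has $i\in Int(\textbf{a})$ iff $i\notin\mathrm{cl}_{\textbf{a}}(\{j\})$ for every $j<i$, i.e. iff for every $j<i$ there is a tight set of $\textbf{a}$ containing $j$ but not $i$.

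The single step is the following. Assume $Int(\textbf{a})\ne[n]$ and let $k=\min\big([n]\setminus Int(\textbf{a})\big)$, so $\{1,\dots,k-1\}\subseteq Int(\textbf{a})$ and $k\notin Int(\textbf{a})$. By the reformulation the set $J=\{\,j<k:\ k\in\mathrm{cl}_{\textbf{a}}(\{j\})\,\}$ is nonempty; put $j_0=\min J$ and $\textbf{b}=\textbf{a}-\textbf{e}_k+\textbf{e}_{j_0}$. I first check $\textbf{b}\in PB$: one has $a_k\ge1$ (otherwise $\textbf{a}-\textbf{e}_k+\textbf{e}_j\notin PB$ for all $j$, so $k$ would be internally active), and since $j_0\in J$ no tight set of $\textbf{a}$ contains $j_0$ but not $k$, so Lemma \ref{transfer} (applied to the transfer $\textbf{a}+\textbf{e}_{j_0}-\textbf{e}_k$) gives $\textbf{b}\in PB$.

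The heart of the argument is the claim $\{1,\dots,k-1\}\subseteq Int(\textbf{b})$. Fix $\ell\le k-1$ and $j'<\ell$; I must exhibit a tight set of $\textbf{b}$ containing $j'$ but not $\ell$. Since $\ell\in Int(\textbf{a})$, every $\mathrm{cl}_{\textbf{a}}(\{i'\})$ with $i'<\ell$ lies in some tight set of $\textbf{a}$ missing $\ell$, hence misses $\ell$. If $j_0\le j'$ (so also $j_0<\ell$), take $S=\mathrm{cl}_{\textbf{a}}(\{j_0\})\cup\mathrm{cl}_{\textbf{a}}(\{j'\})=\mathrm{cl}_{\textbf{a}}(\{j_0,j'\})$: it is tight for $\textbf{a}$, contains $j'$, misses $\ell$, and contains both $j_0$ and $k$ (the latter because $j_0\in J$); as $|S\cap\{j_0,k\}|=2$, the $+1$ and $-1$ in $\sum_{i\in S}b_i=\sum_{i\in S}a_i$ cancel, so $S$ is tight for $\textbf{b}$ too. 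If $j'<j_0$, then $j'\notin J$, so $S=\mathrm{cl}_{\textbf{a}}(\{j'\})$ misses $k$; it also misses $j_0$ (if $j_0\in S$ then $\mathrm{cl}_{\textbf{a}}(\{j_0\})\subseteq S$ would drag $k$ into $S$), so $S\cap\{j_0,k\}=\emptyset$ and again $S$ is tight for $\textbf{b}$, containing $j'$ and missing $\ell$. This proves the claim. Combined with Corollary \ref{a and b} — which says that no index exceeding $\max\{j_0,k\}=k$ changes its internal-activity status on passing from $\textbf{a}$ to $\textbf{b}$ — it follows that $Int(\textbf{a})\subseteq Int(\textbf{b})\subseteq Int(\textbf{a})\cup\{k\}$, hence $Int(\textbf{a})\subseteq Int(\textbf{b})$ and $\iota(\textbf{b})\in\{m,\,m+1\}$.

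It remains to iterate and confirm termination. Starting from $\textbf{a}$, apply the step repeatedly while the current basis has internal activity $<n$. Each step replaces $\textbf{a}^{(t)}$ by $\textbf{a}^{(t)}-\textbf{e}_k+\textbf{e}_{j_0}$ with $j_0<k$, so it strictly decreases the nonnegative integer $\sum_i i\,a^{(t)}_i$ (by $k-j_0\ge1$); hence the process halts after finitely many steps, necessarily at some $\textbf{a}^{(T)}$ with $Int(\textbf{a}^{(T)})=[n]$, i.e. $\iota(\textbf{a}^{(T)})=n$. As $\iota$ runs from $m$ to $n$ in increments of $0$ or $1$ along the chain, some $\textbf{a}^{(t)}$ has $\iota(\textbf{a}^{(t)})=m+1$, which is the required basis. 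The delicate point — where I expect to spend most of the effort — is the crux claim $\{1,\dots,k-1\}\subseteq Int(\textbf{b})$: one must check that the minimal choice $j_0=\min J$ is precisely what makes both cases above close, and one should keep in mind that a single exchange need not raise $\iota$ at all (explicit small examples show $\iota(\textbf{b})=\iota(\textbf{a})$ can occur), so the iteration, rather than a one-shot construction, is essential.
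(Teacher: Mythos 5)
Your proof is correct but takes a genuinely different route from the paper's. Both arguments perform the same local move --- replace $\textbf{a}$ by $\textbf{b}=\textbf{a}-\textbf{e}_{k}+\textbf{e}_{j_0}$, where $k$ is the smallest index that is not internally active for $\textbf{a}$ and $j_0$ is the smallest index with $\textbf{a}-\textbf{e}_{k}+\textbf{e}_{j_0}\in PB$ --- and both establish, by closely parallel tight-set manipulations via Lemmas~\ref{tight}, \ref{transfer} and \ref{ia}, that the indices $1,\dots,k-1$ remain internally active for $\textbf{b}$. The global structure is what differs. The paper runs an extremal argument: it partitions the bases of activity $m$ into classes $B_i$, picks a nonempty one, and starts from a basis minimizing $a_i$ over $B_i$; that minimality is exactly what forces $i$ to become active after a \emph{single} exchange, so $\iota(\textbf{b})=m+1$ in one shot. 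You instead start from an arbitrary basis of activity $m$, observe only $Int(\textbf{a})\subseteq Int(\textbf{b})\subseteq Int(\textbf{a})\cup\{k\}$ (so $\iota$ may stall), iterate, and finish with a monovariant plus a discrete intermediate-value step. Your version is a bit more constructive and actually proves the slightly stronger fact that any basis is joined to one of full internal activity by a chain of single exchanges along which $Int(\cdot)$ is nondecreasing; the cost is having to run the iteration and justify termination. One small correction there: since $P\subset\mathbb{Z}^{n}$ allows negative entries (after translation), $\sum_i i\,a^{(t)}_i$ need not be a nonnegative integer; termination should instead be read off from the facts that $PB$ is finite and the quantity strictly decreases, so no basis recurs --- this changes nothing else.
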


\begin{proof}
For an integer $m<n$, let
$$A_{m}=\{\textbf{a}'\in PB| \iota(\textbf{a}')=m\},$$
$$B_{i}=\{\textbf{b}'\in A_{m}|i \ \text{is the smallest index which is not internally active with respect to} \ \textbf{b}'\}.$$

Note $A_{m}\neq\emptyset$ and for any $\textbf{a}\in PB$, we have that $1\in Int(\textbf{a})$. Then there is an index $i\in [n]$ such that $B_{i}\neq\emptyset$, and $\bigcup_{i=2}^{n}B_{i}=A_{m}$ and $B_{i}\cap B_{j}=\emptyset$ for any $i\neq j$.

Let $\textbf{a}\in B_{i}$ be a basis of the polymatroid $P$ whose the entry $a_{i}$ of $\textbf{a}$ is minimum and $$j=\min\{i'|\textbf{a}-\textbf{e}_{i}+\textbf{e}_{i'}\in PB \ \text{and} \ i'<i\}.$$

 Let $\textbf{b}=\textbf{a}-\textbf{e}_{i}+\textbf{e}_{j}$.
Then we claim that $k$ is internally active with respect to $\textbf{a}$ if and only if $k$ is internally active with respect to $\textbf{b}$ for any index $k\neq i$.

For $k\leq j$, there exists a subset $I\subset [n]$ such that $k=min(I)$ and $[n]\setminus I\in \mathcal{I}(\textbf{a})$ by Lemma \ref{ia} (1) since $k$ is internally active with respect to $\textbf{a}$. For any $t<k\leq j$, by Lemma \ref{transfer}, there exists a subset $I_{t}\subset [n]\setminus \{i\}$ and $t\in I_{t}$ such that $I_{t}\in \mathcal{I}(\textbf{a})$ since $\textbf{a}-\textbf{e}_{i}+\textbf{e}_{t}\notin PB$. We have that $([n]\setminus I)\cap (\bigcup_{t=1}^{k-1}I_{t})\in \mathcal{I}(\textbf{a})$ by Lemma \ref{tight}. Let $([n]\setminus I)\cap (\bigcup_{t=1}^{k-1}I_{t})=[n]\setminus I'$.  Then $[n]\setminus I'\in \mathcal{I}(\textbf{a})$, $k=min(I')$ and $i\notin [n]\setminus I'$. We claim that $[n]\setminus I'\in \mathcal{I}(\textbf{b})$ since $\sum_{i\in [n]\setminus I'}b_{i}\geq \sum_{i\in [n]\setminus I'}a_{i}=f([n]\setminus I')$ (In fact, $j\notin [n]\setminus I'$ and $\sum_{i\in [n]\setminus I'}b_{i}=\sum_{i\in [n]\setminus I'}a_{i}=f([n]\setminus I')$). Hence, the index $k$ is internally active with respect to $\textbf{b}$ by Lemma \ref{ia}.

For $j<k<i$, we have that $k$ is internally active with respect to $\textbf{a}$. Then $k$ is also internally active with respect to $\textbf{b}$ by Lemma \ref{a to b}.

For $k>i$, the claim is true by Corollary \ref{a and b}.

It is obvious that $i$ is internally active with respect to $\textbf{b}$ by the choice of $\textbf{a}$. In fact, if $i$ is not internally active with respect to $\textbf{b}$, then $\textbf{b}\in B_{i}$.  Since $a_{i}>b_{i}$, it contradicts the choice of $\textbf{a}$. Thus, $\iota(\textbf{b})=m+1$.
\end{proof}

\begin{theorem}\label{thm main1}
Let $P\subset \mathbb{Z}^{n}$ be a polymatroid  with ground set $[n]$ and rank function $f$. Then $\mathcal{J}_{P}(x,1)$ and $\mathcal{J}_{P}(1,y)$ are interpolating.
\end{theorem}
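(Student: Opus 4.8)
The plan is to evaluate $\mathcal{J}_P(x,1)$ directly from the definition and show its support is an integer interval, then obtain the statement for $\mathcal{J}_P(1,y)$ by the duality Theorem~\ref{thm dual}. Setting $y=1$ in the definition, the factor $(x+y-1)^{ie(\textbf{a})}$ becomes $x^{ie(\textbf{a})}$ and the factor $y^{oe(\textbf{a})}$ becomes $1$, so
\[
\mathcal{J}_P(x,1)=\sum_{\textbf{a}\in PB} x^{oi(\textbf{a})+ie(\textbf{a})}=\sum_{\textbf{a}\in PB} x^{|Int(\textbf{a})|}=\sum_{\textbf{a}\in PB} x^{\iota(\textbf{a})},
\]
since $oi(\textbf{a})+ie(\textbf{a})=|Int(\textbf{a})\setminus Ext(\textbf{a})|+|Int(\textbf{a})\cap Ext(\textbf{a})|=|Int(\textbf{a})|$. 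Hence the coefficient of $x^m$ in $\mathcal{J}_P(x,1)$ is $|A_m|=|\{\textbf{a}\in PB:\iota(\textbf{a})=m\}|$, and the support of $\mathcal{J}_P(x,1)$ is exactly $\{m:A_m\neq\emptyset\}$.

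It therefore suffices to show that $\{m:A_m\neq\emptyset\}$ is an integer interval. The smallest element of this set is realized by some basis (recall $1\in Int(\textbf{a})$ always, so every nonempty $A_m$ has $m\geq 1$; and $PB\neq\emptyset$), and the largest is at most $n$. If $m_{\min}$ and $m_{\max}$ denote the minimum and maximum values of $\iota$ over $PB$, I claim every integer $m$ with $m_{\min}\le m\le m_{\max}$ occurs. Indeed, if $A_m\neq\emptyset$ for some $m<m_{\max}\le n$, then $m<n$, so Lemma~\ref{lem main} produces a basis with internal activity $m+1$, i.e.\ $A_{m+1}\neq\emptyset$. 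By induction starting from $m=m_{\min}$, every value between $m_{\min}$ and $m_{\max}$ is attained, so $supp(\mathcal{J}_P(x,1))=[m_{\min},m_{\max}]$ is an integer interval, and $\mathcal{J}_P(x,1)$ is interpolating.

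For $\mathcal{J}_P(1,y)$, apply Theorem~\ref{thm dual} to the dual polymatroid $-P$: we have $\mathcal{J}_P(1,y)=\mathcal{J}_{-P}(y,1)$, and the argument just given shows $\mathcal{J}_{-P}(y,1)$ is interpolating. Hence $\mathcal{J}_P(1,y)$ is interpolating as well. The only substantive ingredient is Lemma~\ref{lem main}, which is already established; the rest is bookkeeping with the definition and the duality relation, so I anticipate no real obstacle beyond verifying the exponent simplification $oi(\textbf{a})+ie(\textbf{a})=\iota(\textbf{a})$ and confirming that the minimum internal activity is genuinely attained (which is immediate since $PB$ is nonempty and finite).
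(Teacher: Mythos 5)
Your proof is correct and takes essentially the same route as the paper: both simplify $\mathcal{J}_P(x,1)=\sum_{\textbf{a}\in PB}x^{oi(\textbf{a})+ie(\textbf{a})}=\sum_{\textbf{a}\in PB}x^{\iota(\textbf{a})}$, invoke Lemma~\ref{lem main} to conclude the support of this sum is an integer interval, and apply Theorem~\ref{thm dual} to transfer the result to $\mathcal{J}_P(1,y)$. You have merely spelled out the bookkeeping that the paper compresses into one sentence.
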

\begin{proof}
It follows directly from Lemma \ref{lem main} and Theorem \ref{thm dual} since $\mathcal{J}_{P}(x,1)=\sum_{\textbf{a}\in PB}x^{oi(\textbf{a})}x^{ie(\textbf{a})}=\sum_{\textbf{a}\in PB}x^{\iota(\textbf{a})}$.
\end{proof}

In fact, Theorem \ref{thm main1} can be generalized to any real number not less than 1.
\begin{theorem} \label{main}
Let $P\subset \mathbb{Z}^{n}$ be a polymatroid  with ground set $[n]$ and rank function $f$. Then both $\mathcal{J}_{P}(x,t)$ and $\mathcal{J}_{P}(t,y)$ are interpolating for any fixed real number $t\geq 1$.
\end{theorem}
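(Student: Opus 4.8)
The plan is to reduce the general statement to the already-proven case $t=1$ by writing $\mathcal{J}_P(x,t)$ as an explicit polynomial in $x$ with coefficients built from the activity statistics, and then showing that passing from $t=1$ to a general $t\ge 1$ cannot create ``gaps'' in the support. By Theorem~\ref{thm dual} it suffices to treat $\mathcal{J}_P(x,t)$, since $\mathcal{J}_P(t,y)=\mathcal{J}_{-P}(y,t)$ and $-P$ is again a polymatroid. Starting from the definition,
\[
\mathcal{J}_P(x,t)=\sum_{\mathbf{a}\in PB} x^{oi(\mathbf{a})}\,t^{oe(\mathbf{a})}\,(x+t-1)^{ie(\mathbf{a})},
\]
I would expand $(x+t-1)^{ie(\mathbf{a})}$ by the binomial theorem. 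Since the index $1$ is always in $Int(\mathbf{a})\cap Ext(\mathbf{a})$ we have $ie(\mathbf{a})\ge 1$, and $oi(\mathbf{a})+ie(\mathbf{a})=\iota(\mathbf{a})$. Collecting the coefficient of $x^k$ gives
\[
\mathcal{J}_P(x,t)=\sum_{k=0}^{n} c_k(t)\, x^k,\qquad
c_k(t)=\sum_{\mathbf{a}\in PB}\binom{ie(\mathbf{a})}{\,k-oi(\mathbf{a})\,}\,t^{oe(\mathbf{a})}\,(t-1)^{ie(\mathbf{a})-(k-oi(\mathbf{a}))},
\]
with the convention that the binomial coefficient is $0$ unless $oi(\mathbf{a})\le k\le oi(\mathbf{a})+ie(\mathbf{a})=\iota(\mathbf{a})$.

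The key observation is that for $t\ge 1$ every term in the sum defining $c_k(t)$ is \emph{nonnegative} (indeed $t\ge 1>0$ and $t-1\ge 0$), so there is no cancellation: $c_k(t)>0$ if and only if there is a basis $\mathbf{a}$ with $oi(\mathbf{a})\le k\le \iota(\mathbf{a})$. Hence $supp(\mathcal{J}_P(x,t))=\bigcup_{\mathbf{a}\in PB}[\,oi(\mathbf{a}),\,\iota(\mathbf{a})\,]$, and crucially this set is \emph{independent of the particular value} $t\ge 1$; in particular it equals $supp(\mathcal{J}_P(x,1))$, which was shown to be an integer interval in Theorem~\ref{thm main1}. (Alternatively one can argue directly: the union of the intervals $[oi(\mathbf{a}),\iota(\mathbf{a})]$ is an interval, because each such interval contains the point $oi(\mathbf{a})\le 1\le \iota(\mathbf{a})$... more carefully, $1\in Int(\mathbf a)$ gives $\iota(\mathbf a)\ge 1$ and if $1\notin Ext(\mathbf a)$ then $1\in Int\setminus Ext$ so $oi(\mathbf a)\ge 1$; in all cases $1\le\iota(\mathbf a)$, while the smallest index argument shows $oi(\mathbf a)\le 1$ fails in general — so I would instead just invoke that $supp$ is the same set as for $t=1$ and quote Theorem~\ref{thm main1}.)

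So the skeleton of the write-up is: (i) reduce to $\mathcal{J}_P(x,t)$ via Theorem~\ref{thm dual}; (ii) expand the binomial and identify $c_k(t)$ as a sum of monomials in $t$ and $t-1$ with nonnegative integer coefficients $\binom{ie(\mathbf a)}{k-oi(\mathbf a)}$; (iii) observe that for $t\ge 1$ nonnegativity forbids cancellation, so $c_k(t)\ne 0\iff c_k(1)\ne 0$; (iv) conclude $supp(\mathcal{J}_P(x,t))=supp(\mathcal{J}_P(x,1))$, which is an integer interval by Theorem~\ref{thm main1}, and likewise for $\mathcal{J}_P(t,y)$. The main (and essentially only) point requiring care is step~(iii): one must check that the $t=1$ specialization does not accidentally kill a coefficient that is nonzero for larger $t$, which is exactly where nonnegativity of every summand is used — for $t=1$ the factor $(t-1)^{ie(\mathbf a)-(k-oi(\mathbf a))}$ vanishes unless the exponent is $0$, i.e. unless $k=\iota(\mathbf a)$, so I should double-check that $c_k(1)=\sum_{\mathbf a:\ \iota(\mathbf a)=k}1$ indeed matches $\mathcal{J}_P(x,1)=\sum_{\mathbf a}x^{\iota(\mathbf a)}$ as computed in the proof of Theorem~\ref{thm main1}, and that the union $\bigcup_{\mathbf a}[oi(\mathbf a),\iota(\mathbf a)]$ therefore contains $\{\iota(\mathbf a):\mathbf a\in PB\}$, whose being an interval is the content of Lemma~\ref{lem main}. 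Everything else is a routine binomial manipulation.
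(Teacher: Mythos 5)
Your binomial-expansion start is the same as the paper's, and the nonnegativity observation is right and is indeed the crux. But the claim ``$c_k(t)\neq0\iff c_k(1)\neq0$, so $\operatorname{supp}(\mathcal{J}_P(x,t))=\operatorname{supp}(\mathcal{J}_P(x,1))$'' is false, and this is a genuine gap, not a ``routine manipulation'' to fill in. At $t=1$ the factor $(t-1)^{ie(\mathbf a)-(k-oi(\mathbf a))}$ kills every term except the one with $k=\iota(\mathbf a)$, so $\operatorname{supp}(\mathcal{J}_P(x,1))=\{\iota(\mathbf a):\mathbf a\in PB\}$; for $t>1$ you correctly get $\operatorname{supp}(\mathcal{J}_P(x,t))=\bigcup_{\mathbf a}[oi(\mathbf a),\iota(\mathbf a)]$, which is generally strictly larger. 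The paper's own example in the concluding remarks makes this concrete: for the polymatroid there, $\mathcal{J}_P(x,1)=x^3+2x^2+2x$ has support $\{1,2,3\}$, but for $t>1$ the support of $\mathcal{J}_P(x,t)=(x+t-1)(t^2+t+2tx+x^2)$ is $\{0,1,2,3\}$. Your parenthetical fallback (every interval contains the point $1$) also fails, as you yourself note, since nothing forces $oi(\mathbf a)\le1$.

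What is actually needed, and what the paper does, is to avoid equating supports and instead show directly that $\bigcup_{\mathbf a}[oi(\mathbf a),\iota(\mathbf a)]$ is an interval. Pick $\mathbf a_0\in PB$ with $oi(\mathbf a_0)$ minimal; then the lowest degree appearing is $oi(\mathbf a_0)$, and $\mathbf a_0$ alone contributes positive coefficients at every degree in $[oi(\mathbf a_0),\iota(\mathbf a_0)]$. For degrees $\iota(\mathbf a_0)+1,\dots,n$, Lemma~\ref{lem main} supplies, for each such $k$, a basis $\mathbf b$ with $\iota(\mathbf b)=k$, and that $\mathbf b$ contributes a positive coefficient at degree $k$. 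This chains the intervals together via the right endpoints $\iota(\mathbf a)$, not via a fixed common point. (Equivalently: since $\{\iota(\mathbf a)\}=[m_1,n]$ is an interval and every $[oi(\mathbf a),\iota(\mathbf a)]$ meets it at $\iota(\mathbf a)$, the union is $[\min_{\mathbf a}oi(\mathbf a),\,n]$.) You should replace step (iv) with this argument; as written, your proposal stops one step short of a proof and asserts something false along the way.
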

\begin{proof}
The conclusion holds for $t=1$ by Theorem \ref{thm main1}. Now we consider the case $t>1$.

By the definition of polymatroid Tutte polynomial $\mathcal{J}_{P}(x,y)$, we have that
\begin{eqnarray*}
\mathcal{J}_{P}(x,t)&=&\sum_{\textbf{a}\in PB}x^{oi(\textbf{a})}t^{oe(\textbf{a})}(x+t-1)^{ie(\textbf{a})}\\
&=&\sum_{\textbf{a}\in PB}t^{oe(\textbf{a})}x^{oi(\textbf{a})}[x+(t-1)]^{ie(\textbf{a})}\\
&=&\sum_{\textbf{a}\in PB}t^{oe(\textbf{a})}x^{oi(\textbf{a})}\left[\sum_{i=0}^{ie(\textbf{a})}\binom{ie(\textbf{a})}{i}x^{i}(t-1)^{ie(\textbf{a})-i}\right]\\
&=&\sum_{\textbf{a}\in PB}t^{oe(\textbf{a})}\left[\sum_{i=0}^{ie(\textbf{a})}\binom{ie(\textbf{a})}{i}(t-1)^{ie(\textbf{a})-i}x^{oi(\textbf{a})+i}\right].
\end{eqnarray*}

For any $\textbf{a}\in PB$, $\textbf{a}$ contributes the lowest degree $oi(\textbf{a})$ and the highest degree $oi(\textbf{a})+ie(\textbf{a})=\iota(\textbf{a})$ to  $\mathcal{J}_{P}(x,t)$, and the coefficient of every contributing degree from $oi(\textbf{a})$ to $\iota(\textbf{a})$ is positive.

Let $\textbf{a}\in PB$ be the basis of $P$ such that $oi(\textbf{a})$ is minimum. Then the lowest degree in $\mathcal{J}_{P}(x,t)$ is $oi(\textbf{a})$. Based on the above discussion, the coefficients of the degrees from $oi(\textbf{a})$ to $\iota(\textbf{a})$ are positive. The coefficients  of the degrees from $\iota(\textbf{a})+1$ to $n$ are positive  by Lemma \ref{lem main}. Hence, the conclusion is true.
\end{proof}

\section{High-order terms of $\mathcal{J}_{P}(x,1)$ and $\mathcal{J}_{P}(1,y)$}
\noindent

In this section, we will study the coefficients of the terms of degree $n$ and $(n-1)$ in $\mathcal{J}_{P}(x,1)$ and $\mathcal{J}_{P}(1,y)$.

Let $P\subset \mathbb{Z}^{n}$ be a polymatroid  with ground set $[n]$ and rank function $f$. And let $\textbf{a}^{max}\in PB$ be the lexicographically maximal basis of the polymatroid $P$, that is, if $\textbf{b}\neq \textbf{a}^{max}$ is another  basis of the polymatroid $P$ and $j=\min\{i'|b_{i'}\neq a^{max}_{i'}\}$, then $b_{j}<a^{max}_{j}$. It is easy to see that $\iota(\textbf{a}^{max})=n$, and for any $k\in [n]$, we have that $\sum_{i\in [k]}a^{max}_{i}=f([k])$, that is, $a^{max}_{k}=f([k])-f([k-1])$.

For any basis $\textbf{b}\neq\textbf{a}^{max}$, let $j=\max\{i'|b_{i'}\neq a^{max}_{i'}\}$, then $b_{j}>a^{max}_{j}$ since $\sum_{i\in [n]\setminus [j]}b_{i}=\sum_{i\in [n]\setminus [j]}a^{max}_{i}$, $\sum_{i\in [n]}b_{i}=\sum_{i\in [n]}a^{max}_{i}$ and $\sum_{i\in [j-1]}b_{i}<f([j-1])=\sum_{i\in [j-1]}a^{max}_{i}$. By Basis Exchange Axiom, we have that there is an index $i\in [n]$ with $b_{i}<a^{max}_{i}$ such that $\textbf{b}+\textbf{e}_{i}-\textbf{e}_{j}\in PB$. It is easy to see that $i<j$. Then $j\notin Int(\textbf{b})$, that is, $\iota(\textbf{b})<n$. We have the following conclusion by above discussion and the duality.
\begin{theorem} \label{n}
Let $P\subset \mathbb{Z}^{n}$ be a polymatroid  with ground set $[n]$ and rank function $f$. Then the coefficients of the terms of degree $n$ in $\mathcal{J}_{P}(x,1)$ and $\mathcal{J}_{P}(1,y)$ are 1.
\end{theorem}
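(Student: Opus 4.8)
The plan is to show that $\textbf{a}^{max}$ is the unique basis of $P$ with internal activity exactly $n$; the coefficient of $x^n$ in $\mathcal{J}_P(x,1) = \sum_{\textbf{a}\in PB} x^{\iota(\textbf{a})}$ is then $1$, and the claim for $\mathcal{J}_P(1,y)$ follows at once from Theorem \ref{thm dual}. First I would record that $\iota(\textbf{a}^{max}) = n$: since $\sum_{i\in[k]} a^{max}_i = f([k])$ for every $k$, the set $[k]$ lies in $\mathcal{I}(\textbf{a}^{max})$, hence by Lemma \ref{ia}(1), taking $I = [n]\setminus[k-1]$ with $\min(I) = k$ and $[n]\setminus I = [k-1] \in \mathcal{I}(\textbf{a}^{max})$, every index $k\in[n]$ is internally active. (For $k=1$ one uses $[n]\setminus I = \emptyset \in \mathcal{I}(\textbf{a}^{max})$.)

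Next I would show that no other basis attains internal activity $n$. Given $\textbf{b}\neq\textbf{a}^{max}$, set $j = \max\{i' : b_{i'}\neq a^{max}_{i'}\}$. Since $\textbf{b}$ and $\textbf{a}^{max}$ agree on $[n]\setminus[j]$ and have the same total sum $f([n])$, while $\sum_{i\in[j-1]} b_i \le f([j-1]) = \sum_{i\in[j-1]} a^{max}_i$ and in fact this inequality must be strict (equality would force $b_i = a^{max}_i$ on all of $[j]$ by lexicographic maximality of $\textbf{a}^{max}$, contradicting the choice of $j$), we get $b_j > a^{max}_j$. Now apply the Basis Exchange Axiom to $\textbf{b}$ and $\textbf{a}^{max}$ at the index $j$ (where $b_j > a^{max}_j$): there is an index $i$ with $b_i < a^{max}_i$ and $\textbf{b} + \textbf{e}_i - \textbf{e}_j \in PB$. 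Because $\textbf{b}$ and $\textbf{a}^{max}$ agree above $j$, such an $i$ must lie in $[j-1]$, so $i < j$. Then $\textbf{b} - \textbf{e}_j + \textbf{e}_i \in PB$ with $i < j$ witnesses that $j \notin Int(\textbf{b})$, so $\iota(\textbf{b}) \le n-1$.

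Combining the two steps, $\{\textbf{a}\in PB : \iota(\textbf{a}) = n\} = \{\textbf{a}^{max}\}$, so the coefficient of $x^n$ in $\mathcal{J}_P(x,1)$ equals $1$. Applying this to the dual polymatroid $-P$ and using $\mathcal{J}_P(1,y) = \mathcal{J}_{-P}(y,1)$ from Theorem \ref{thm dual} gives the statement for $\mathcal{J}_P(1,y)$. The only slightly delicate point is the justification that $\sum_{i\in[j-1]} b_i < f([j-1])$ is strict (rather than just $\le$), which is needed to conclude $b_j > a^{max}_j$ and thus to pick $j$ as the coordinate at which to apply the exchange axiom; this is handled by the lexicographic extremality of $\textbf{a}^{max}$ as indicated above. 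Everything else is a direct application of the Basis Exchange Axiom and Lemma \ref{ia}.
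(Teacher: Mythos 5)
Your proof is correct in substance and follows essentially the same route as the paper: show $\iota(\textbf{a}^{max})=n$, then show that for any other basis $\textbf{b}$ the largest index $j$ of disagreement satisfies $b_j>a^{max}_j$, invoke the Basis Exchange Axiom to produce $i<j$ with $\textbf{b}+\textbf{e}_i-\textbf{e}_j\in PB$ so that $j\notin Int(\textbf{b})$, and dualize for $\mathcal{J}_P(1,y)$. One small slip in the ``delicate point'' you flag: you justify strictness of $\sum_{i\in[j-1]}b_i<f([j-1])$ by asserting that equality ``would force $b_i=a^{max}_i$ on all of $[j]$ by lexicographic maximality,'' but lexicographic maximality does not give this---two distinct bases can both be tight on $[j-1]$ without agreeing there. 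The correct reason is purely arithmetic: equality $\sum_{i\in[j-1]}b_i=\sum_{i\in[j-1]}a^{max}_i$, agreement of $\textbf{b}$ and $\textbf{a}^{max}$ above $j$, and the common total $f([n])$ together force $b_j=a^{max}_j$, contradicting the choice of $j$. In fact you need not establish strictness first at all: $\sum_{i\in[j-1]}b_i\le f([j-1])$ already gives $b_j\ge a^{max}_j$, and $b_j\ne a^{max}_j$ upgrades this to $b_j>a^{max}_j$; the strict inequality on $[j-1]$ is then a consequence rather than a prerequisite.
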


For a hypergraphical polymatroid $P_{\mathcal{H}}$, we know that $\mathcal{J}_{P_{\mathcal{H}}}(x,1)=x^{n}I_{P_{\mathcal{H}}}(\frac{1}{x})$ and $\mathcal{J}_{P_{\mathcal{H}}}(1,y)=y^{n}X_{P_{\mathcal{H}}}(\frac{1}{y})$, respectively. In \cite{Kalman1}, K\'{a}lm\'{a}n obtained the coefficients of the constant term in the interior and exterior polynomials.

Next, we study the coefficients of the terms of degree $(n-1)$ in $\mathcal{J}_{P}(x,1)$ and $\mathcal{J}_{P}(1,y)$.
\begin{theorem} \label{main}
Let $P\subset \mathbb{Z}^{n}$ be a polymatroid   with ground set $[n]$ and rank function $f$. Then the coefficients of the terms of degree $(n-1)$ in $\mathcal{J}_{P}(x,1)$ and $\mathcal{J}_{P}(1,y)$ are $\sum_{i\in [n]}f(\{i\})-f([n])$ and $\sum_{i\in [n]}f([n]\setminus \{i\})-(n-1)f([n])$, respectively.
\end{theorem}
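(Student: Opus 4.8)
The plan is to compute the coefficient of $x^{n-1}$ in $\mathcal{J}_P(x,1) = \sum_{\textbf{a}\in PB} x^{\iota(\textbf{a})}$, i.e. to count (with multiplicity) the bases $\textbf{a}$ with internal activity exactly $n-1$. By Theorem~\ref{n} we already know $\textbf{a}^{max}$ is the unique basis with $\iota(\textbf{a})=n$, so every other basis has $\iota(\textbf{a})\le n-1$, and we must identify exactly those with $\iota(\textbf{a})=n-1$. A basis $\textbf{b}\neq\textbf{a}^{max}$ has $\iota(\textbf{b})=n-1$ precisely when there is a unique index not internally active with respect to $\textbf{b}$. The statement about the dual, $\mathcal{J}_P(1,y)=\mathcal{J}_{-P}(y,1)$ via Theorem~\ref{thm dual}, will then give the second formula from the first applied to $-P$, using $f_{-P}(I) = f([n]) - f([n]\setminus I)$ (the dual rank function) — though since $-P$ need not be a nonnegative polymatroid one should check the activity combinatorics used only require the integer polymatroid axioms, which hold.

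First I would pin down which bases contribute. From the discussion preceding Theorem~\ref{n}: for $\textbf{b}\neq\textbf{a}^{max}$, if $j=\max\{i' \mid b_{i'}\neq a^{max}_{i'}\}$ then $j\notin Int(\textbf{b})$. I claim that $\iota(\textbf{b})=n-1$ forces $\textbf{b}$ to differ from $\textbf{a}^{max}$ in exactly two coordinates, and in a very restricted way: $\textbf{b} = \textbf{a}^{max} - \textbf{e}_j + \textbf{e}_i$ for some $i<j$ with $j\notin Int(\textbf{b})$ the only inactive index. The reason is that if $\textbf{b}$ differs from $\textbf{a}^{max}$ in coordinates spanning a range, one produces more than one inactive index (each "local maximum" relative to $\textbf{a}^{max}$ in the pattern of differences kills an index $>$ the corresponding exchange partner, by the Basis Exchange argument already given, refined using Lemma~\ref{a to b} / Corollary~\ref{a and b}). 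So I would carefully show: $\textbf{b}$ contributes iff $\textbf{b}=\textbf{a}^{max}-\textbf{e}_j+\textbf{e}_i$ is a basis, $i<j$, and this is the \emph{unique} such exchange, i.e. $\textbf{a}^{max}-\textbf{e}_j+\textbf{e}_{i'}\notin PB$ for all $i'<i$ and all $i'$ with $i<i'<j$ that would also work — actually the cleanest route is: $j\notin Int(\textbf{b})$ and every $k\neq j$ \emph{is} in $Int(\textbf{b})$.

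Next I would translate "$\textbf{b}=\textbf{a}^{max}-\textbf{e}_j+\textbf{e}_i\in PB$ with all indices except $j$ active" into a condition on $f$. Using Lemma~\ref{transfer} and Lemma~\ref{ia} with the explicit tight sets of $\textbf{a}^{max}$ (namely $[k]\in\mathcal{I}(\textbf{a}^{max})$ for all $k$, since $a^{max}_k=f([k])-f([k-1])$), one finds that $\textbf{a}^{max}-\textbf{e}_j+\textbf{e}_i\in PB$ requires in particular $i=1$: indeed decreasing coordinate $j$ keeps all $\sum_{t\in[k]}b_t\le f([k])$, but increasing coordinate $i>1$ by one would violate $\sum_{t\in[i]}b_t\le f([i])$ because $[i]$ is tight for $\textbf{a}^{max}$ and $j\notin[i]$. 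So the only candidates are $\textbf{b}=\textbf{a}^{max}-\textbf{e}_j+\textbf{e}_1$ for $j\in\{2,\dots,n\}$, and such $\textbf{b}$ lies in $PB$ iff $a^{max}_j\ge 1$ and (by Lemma~\ref{transfer}) no set $I$ with $j\in I\subset[n]\setminus 1$ is tight for $\textbf{a}^{max}$ — and one checks the unique inactive index is then exactly $j$. Counting: $b_j = a^{max}_j - 1 = f([j])-f([j-1])-1\ge 0$, and summing the contributions, together with checking that the "no tight $I$" condition is automatic, should collapse to $\sum_{j=2}^n a^{max}_j = f([n]) - f([1]) = f([n])-f(\{1\})$ bases each contributing $1$... but wait — by $S_n$-invariance of $\mathcal{J}_P$ (Bernardi et al.) the coefficient cannot depend on the labelling, so the answer must be the symmetric $\sum_{i\in[n]} f(\{i\}) - f([n])$; I would reconcile this by observing that over all orderings the count of degree-$(n-1)$ bases is fixed, and a direct symmetric recount (for each $i$, the basis obtained by pushing one unit from coordinate $i$ toward coordinate $1$, across all relabellings) yields $\sum_i (f(\{i\})-\text{overlaps})$, the inclusion–exclusion giving exactly $\sum_i f(\{i\}) - f([n])$ by submodularity and the fact that $([n], f)$ connected pieces behave additively. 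The main obstacle will be exactly this bookkeeping step: showing rigorously, in an order-independent way, that the multiset of bases with $\iota=n-1$ has size $\sum_{i\in[n]}f(\{i\})-f([n])$ — i.e. that the naive ordered count $\sum_{j\ge 2}(f([j])-f([j-1])-1+1)$-type expression must be massaged, via Lemma~\ref{tight} and submodularity, into the claimed symmetric form, and that no basis is counted twice and none with a tight obstruction sneaks in. Once that is done, applying everything to $-P$ and simplifying $\sum_i (f([n])-f([n]\setminus\{i\})) $-type expressions gives the second formula, i.e. coefficient $\sum_{i\in[n]}f([n]\setminus\{i\}) - (n-1)f([n])$.
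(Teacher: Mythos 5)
Your proposal contains a fundamental error in the characterization of the bases with $\iota(\textbf{a})=n-1$, and this error propagates through the rest of the argument.

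You claim such a basis has the form $\textbf{b}=\textbf{a}^{max}-\textbf{e}_j+\textbf{e}_i$ with $i<j$ and $j$ the unique inactive index, i.e.\ $b_j<a^{max}_j$. This is backward. In the paper's discussion before Theorem~\ref{n}, if $j=\max\{i'\mid b_{i'}\neq a^{max}_{i'}\}$, then $b_j>a^{max}_j$ (strictly larger), and basis exchange pushes a unit \emph{from} $j$ \emph{toward} a smaller index, exhibiting $j$ as inactive. So the inactive index carries \emph{more} mass than in $\textbf{a}^{max}$, not less. Indeed, the paper's witnesses $\textbf{a}^{it}$ have $a^{it}_i=a^{max}_i+t$ with $t\geq 1$. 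In fact, your own computation later in the proposal refutes your candidate form: since $[i]$ is tight for $\textbf{a}^{max}$ and $j\notin[i]$ when $i<j$, one has $\sum_{t\in[i]}(\textbf{a}^{max}-\textbf{e}_j+\textbf{e}_i)_t=f([i])+1$, so $\textbf{a}^{max}-\textbf{e}_j+\textbf{e}_i\notin PB$ for \emph{every} $i<j$, including $i=1$. (Try $n=2$, $f(\{1\})=f(\{2\})=3$, $f(\{1,2\})=4$, $\textbf{a}^{max}=(3,1)$: your candidate $(4,0)$ is not a basis, yet $(2,2)$ and $(1,3)$ both have $\iota=1$.)

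A second error is the assumption that a basis with exactly one inactive index must differ from $\textbf{a}^{max}$ in only two coordinates, each by one unit. The paper's $\textbf{a}^{it}$ agree with $\textbf{a}^{max}$ only above index $i$; below $i$ they can differ in many coordinates, and $t$ can be any value in $\{1,\dots,f(\{i\})-a^{max}_i\}$. This is exactly why the count is $\sum_i f(\{i\})-f([n])$ rather than something bounded by $n-1$. You sense the problem when your naive count produces the non-symmetric $f([n])-f(\{1\})$, and you appeal to $S_n$-invariance plus ``inclusion--exclusion by submodularity'' to fix it, but you give no argument --- and since the underlying characterization is wrong, there is nothing to massage into the right form. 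The correct route (the paper's) is to parametrize the degree-$(n-1)$ bases by pairs $(i,t)$: show that for each $i$ and each $1\leq t\leq f(\{i\})-a^{max}_i$ there is a unique basis $\textbf{a}^{it}$ that is reverse-lexicographically maximal among bases agreeing with $\textbf{a}^{max}$ above $i$ and having $a_i=a^{max}_i+t$, that its only inactive index is $i$, and conversely that every basis with a single inactive index $i$ arises this way; the symmetric count $\sum_i(f(\{i\})-a^{max}_i)=\sum_i f(\{i\})-f([n])$ then falls out because $\sum_i a^{max}_i=f([n])$, with no need to invoke $S_n$-invariance. Your dual step via Theorem~\ref{thm dual} and $f_{-P}(\{i\})=-(f([n])-f([n]\setminus\{i\}))$ is fine and matches the paper.
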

\begin{proof}
We firstly show that the coefficient of the degree $(n-1)$  in $\mathcal{J}_{P}(x,1)$ is $\sum_{i\in [n]}f(\{i\})-f([n])$.

We now show that for any $i\in [n]$ and $0\leq t \leq f(\{i\})-a^{max}_{i}$, there is a basis $\textbf{b}$ of the polymatroid  $P$ such that $b_{k}=a^{max}_{k}$ for any $k>i$ and $b_{i}=a^{max}_{i}+t$, where $\textbf{a}^{max}\in PB$ is the lexicographically maximal basis of the polymatroid $P$. Suppose, to the contrary, that for some $i\in [n]$ and $0\leq t \leq f(\{i\})-a^{max}_{i}$, there does not exist such basis. Assume, without loss of generality, the conclusion is true for $t=s$ and not true for $t=s+1$, where $0\leq s < f(\{i\})-a^{max}_{i}$,  since it holds for $t=0$. Let $\textbf{a}$ be a basis of $P$ with $a_{k}=a^{max}_{k}$ for any $k>i$ and $a_{i}=a^{max}_{i}+s$. It is clear that $a_{i}<f(\{i\})$ and $i\in Ext(\textbf{a})$.
Then  by Lemma \ref{ia} (2), we have that there exists a subset $I\subset [n]$ such that $i=min (I)$ and $I\in \mathcal{I}(\textbf{a})$. It is clear that $[i]\in \mathcal{I}(\textbf{a})$ by the construction of $\textbf{a}$. Then $\{i\}=[i]\cap I\in \mathcal{I}(\textbf{a})$ by Lemma \ref{tight}, that is, $a_{i}=f(\{i\})$, a  contradiction.

For any $i\in [n]$ and $0\leq t \leq f(\{i\})-a^{max}_{i}$, let $\textbf{a}^{it}$ $(1\leq t \leq f(\{i\})-a^{max}_{i})$ be  reverse lexicographically maximal basis among these, that is, if $\textbf{b}\neq \textbf{a}^{it}$ is another such basis  and $j=\max\{i'|b_{i'}\neq a^{it}_{i'}\}$, then $b_{j}>a^{it}_{j}$. Since $\sum_{i\in [n]}[f(\{i\})-a^{max}_{i}]=\sum_{i\in [n]}f(\{i\})-\sum_{i\in [n]}a^{max}_{i}=\sum_{i\in [n]}f(\{i\})-\sum_{i\in [n]}[f([i])-f([i-1])]=\sum_{i\in [n]}f(\{i\})-f([n])$, there are $\sum_{i\in [n]}f(\{i\})-f([n])$ those bases.

We next show that $Int(\textbf{a}^{it})=[n]\setminus \{i\}$, that is, $\iota(\textbf{a}^{it})=n-1$.

\begin{enumerate}
\item[(1)] If $k>i$, then $k\in Int(\textbf{a}^{it})$ since $\sum_{i'\in [k-1]}a_{i'}^{it}=\sum_{i'\in [k-1]}a_{i'}^{max}=f([k-1])$.
%
\item[(2)] If $k<i$, then $k\in Int(\textbf{a}^{it})$. Otherwise, there is a basis $\textbf{c}\in PB$  with $c_{k}=a^{it}_{k}-1$, $c_{j}=a^{it}_{j}+1$ for some $j<k$ and $c_{i'}=a^{it}_{i'}$ for any $i'\in [n]\setminus \{k,j\}$. It contradicts the choice of $\textbf{a}^{it}$.
\item[(3)] By Basis Exchange Axiom (similar to the proof in Theorem \ref{n}), we have that $i\notin Int(\textbf{a}^{it})$.
\end{enumerate}

In the following, we show that if $\textbf{a}\in PB$ is a basis with $Int(\textbf{a})=[n]\setminus \{i\}$, then $\textbf{a}$ is one of the $\textbf{a}^{it}$.

For any $k>i$ and $t\geq k$, we have that $t\in Int(\textbf{a})$. Then by Lemma \ref{ia} (1), we have that there exists a subset $I_{t}\subset [n]$ such that $t=min (I_{t})$ and $[n]\setminus I_{t}\in \mathcal{I}(\textbf{a})$. Then $\cap_{t\geq k}([n]\setminus I_{t})=[k-1]\in \mathcal{I}(\textbf{a})$ by Lemma \ref{tight}. It implies that $a_{k}=a^{max}_{k}$ for any $k>i$.

Assume that $t=a_{i}-a^{max}_{i}$. Then $t\geq 1$, otherwise, $i\in Int(\textbf{a})$. We have that $a_{k}=a^{it}_{k}$ for any $k\geq i$ since $a^{it}_{k}=a^{max}_{k}$ for any $k>i$. We now claim that $\textbf{a}=\textbf{a}^{it}$. By contradiction, suppose that $\textbf{a}\neq\textbf{a}^{it}$ and $j=max\{i'|a_{i'}\neq a^{it}_{i'}\}$. Then $j<i$ and $a_{j}>a^{it}_{j}$ by the choice of $\textbf{a}^{it}$.
By Lemma \ref{ia} (1), there exists a subset $I\subset [n]$ such that $j=min(I)$ and $[n]\setminus I\in \mathcal{I}(\textbf{a})$ since $j\in Int(\textbf{a})$. Note that $[j-1]\subset [n]\setminus I$, $\sum_{i'\in [n]}a^{it}_{i'}=\sum_{i'\in [n]}a_{i'}=f([n])$ and $a_{i'}=a^{it}_{i'}$ for any $i'>j$.  We have that $([n]\setminus I)\cap [j-1]=[j-1]$ and $\sum_{i'\in [j-1]}a^{it}_{i'}>\sum_{i'\in  [j-1]}a_{i'}$. Then $\sum_{i'\in [n]\setminus I}a^{it}_{i'}=\sum_{i'\in  [j-1]}a^{it}_{i'}+\sum_{i'\in ([n]\setminus I) \setminus [j-1]}a^{it}_{i'}>\sum_{i'\in  [j-1]}a_{i'}+\sum_{i'\in ([n]\setminus I)\setminus [j-1]}a_{i'}=\sum_{i'\in [n]\setminus I}a_{i'}=f([n]\setminus I)$. It contradicts the fact $\textbf{a}^{it}\in PB$. Then $\textbf{a}=\textbf{a}^{it}$. Hence, the coefficient of the degree $(n-1)$ in $\mathcal{J}_{P}(x,1)$ is $\sum_{i\in [n]}f(\{i\})-f([n])$.

Now we consider the coefficient of the degree $(n-1)$ in  $\mathcal{J}_{P}(1,y)$. We have that $\mathcal{J}_{P}(1,y)=\mathcal{J}_{-P}(y,1)$ by Theorem \ref{thm dual}.
In particular, the coefficient of the degree $(n-1)$ in $\mathcal{J}_{P}(1,y)$ is equal to the coefficient of the degree $(n-1)$ in $\mathcal{J}_{-P}(y,1)$.

Let $-P\subset Z^{n}$ be a polymatroid with rank function $f'$. Then by definition of the dual polymatroid, we have that
 $f'(\{i\})=\max_{\textbf{-a}\in -PB}-a_{i}=-\min_{\textbf{a}\in PB}a_{i}=-(f([n])-f([n]\setminus \{i\}))$ and $f'([n])=-f([n])$.
 Hence $\sum_{i\in [n]}f'(\{i\})-f'([n])=-\sum_{i\in [n]}(f([n])-f([n]\setminus \{i\}))+f([n])=\sum_{i\in [n]}f([n]\setminus \{i\})-(n-1)f([n])$. The conclusion holds.
\end{proof}

In \cite{Kalman1}, K\'{a}lm\'{a}n studied the coefficient of the linear term in the interior polynomial. In \cite{Guan}, the authors studied the coefficient of the linear term in the exterior polynomial. Their results can be derived form Theorem \ref{main} as follows.

\begin{corollary} \cite{Kalman1}
Let $\mathcal{H}=(V,E)$ be a connected hypergraph and $Bip \mathcal{H}=(V\cup E, \varepsilon)$ be the bipartite graph associated to  $\mathcal{H}$. Then the coefficient of the linear term in  the interior polynomial $I_{\mathcal{H}}(x)$ is $|\varepsilon|-(|E|+|V|)+1$.
\end{corollary}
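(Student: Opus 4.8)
The plan is to obtain this corollary as an immediate consequence of Theorem~\ref{main} (the formula for the coefficient of the degree $(n-1)$ term) applied to the hypergraphical polymatroid $P_{\mathcal{H}}$. Here the ground set of $P_{\mathcal{H}}$ is the edge set $E$, so $n=|E|$, its rank function $f$ is Kálmán's $\mu$-function, and $I_{\mathcal{H}}=I_{P_{\mathcal{H}}}$. The first step is the change of variables: from $\mathcal{J}_{P_{\mathcal{H}}}(x,1)=x^{n}I_{\mathcal{H}}(\tfrac1x)$, writing $I_{\mathcal{H}}(x)=\sum_{k\ge 0}c_{k}x^{k}$ we get $\mathcal{J}_{P_{\mathcal{H}}}(x,1)=\sum_{k\ge 0}c_{k}x^{n-k}$, so the coefficient $c_{1}$ of the linear term of $I_{\mathcal{H}}$ is exactly the coefficient of $x^{n-1}$ in $\mathcal{J}_{P_{\mathcal{H}}}(x,1)$. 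By Theorem~\ref{main} this coefficient equals $\sum_{i\in[n]}f(\{i\})-f([n])$.

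It then remains to evaluate $f$ on singletons and on the whole ground set. Since $\mathcal{H}$ is connected, every hypertree has coordinate sum $|V|-1$, so $f(E)=|V|-1$. For a single edge $e\in E$, $f(\{e\})$ is the largest value $f(e)$ can take over all hypertrees $f$; realizing a hypertree by a spanning tree $\Gamma$ of $Bip\mathcal{H}$ with $\deg_{\Gamma}(e)=f(e)+1$, and noting that one can choose $\Gamma$ so that $e$ is joined to all of its neighbours in $Bip\mathcal{H}$ (again using connectedness of $\mathcal{H}$), we get $f(\{e\})=\deg_{Bip\mathcal{H}}(e)-1$, where $\deg_{Bip\mathcal{H}}(e)$ is the number of edges of $Bip\mathcal{H}$ incident to $e$, i.e.\ the size of the hyperedge $e$.

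Summing over $E$ and using that the degrees of the vertices in the $E$-class of $Bip\mathcal{H}$ add up to $|\varepsilon|$ gives
\[
\sum_{i\in[n]}f(\{i\})=\sum_{e\in E}\bigl(\deg_{Bip\mathcal{H}}(e)-1\bigr)=|\varepsilon|-|E|,
\]
so the coefficient of the linear term of $I_{\mathcal{H}}$ is $(|\varepsilon|-|E|)-(|V|-1)=|\varepsilon|-(|E|+|V|)+1$, as claimed.

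The only real work is the middle step: identifying the rank function of $P_{\mathcal{H}}$ explicitly and justifying the closed forms $f(\{e\})=\deg_{Bip\mathcal{H}}(e)-1$ and $f(E)=|V|-1$. This is where the connectedness hypothesis enters essentially — it guarantees that the $V$-neighbourhood of $E$ is all of $V$ and that the star at a single hyperedge extends to a spanning tree of $Bip\mathcal{H}$; everything else is the bookkeeping above, and the $X_{\mathcal{H}}$-analogue would follow verbatim from the corresponding half of Theorem~\ref{main}.
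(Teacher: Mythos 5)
Your proposal is correct and follows essentially the same route as the paper's proof: apply Theorem~\ref{main} to the hypergraphical polymatroid, then evaluate $f(E)=|V|-1$ and $f(\{e\})=\deg_{Bip\mathcal{H}}(e)-1$ and sum. The paper simply states these evaluations as clear, while you supply the spanning-tree justification explicitly, but the argument is the same.
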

\begin{proof}
Let $P_{\mathcal{H}}$ be a hypergraphical polymatroid, with rank function $f$,  corresponding to the hypergraph $\mathcal{H}$. It is clear that $f(E)=|V|-1$, and $f(e)=d_{Bip \mathcal{H}}(e)-1$ for any $e\in E$, that is, $\sum_{e\in E}f(e)=|\varepsilon|-|E|$. Then $\sum_{e\in E}f(e)-f(E)=|\varepsilon|-(|E|+|V|)+1$.
\end{proof}

\begin{corollary} \cite{Guan}
Let $\mathcal{H}=(V,E)$ be a connected hypergraph. If  $\mathcal{H}-e$ is connected for each $e\in E$, then the coefficient of the linear term in  the exterior polynomial $X_{\mathcal{H}}(y)$ is $|V|-1$.
\end{corollary}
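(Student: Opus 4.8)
The plan is to derive this directly from Theorem~\ref{main} applied to the hypergraphical polymatroid $P_{\mathcal H}$ associated with $\mathcal H=(V,E)$. Its ground set is $E$, so $n=|E|$, and its rank function $f\colon 2^{E}\to\mathbb{Z}_{\geq 0}$ is K\'alm\'an's $\mu$-function: for $E'\subseteq E$, $f(E')$ is the number of vertices covered by the hyperedges in $E'$ minus the number of connected components of the sub-hypergraph they span (this is the same normalization used in the proof of the previous corollary, where $f(E)=|V|-1$ and $f(\{e\})=d_{Bip\,\mathcal H}(e)-1$). First I would record the dictionary between $X_{\mathcal H}$ and $\mathcal J_{P_{\mathcal H}}$: from $\mathcal J_{P_{\mathcal H}}(1,y)=y^{n}X_{\mathcal H}(1/y)$ one reads off that the coefficient of $y^{k}$ in $X_{\mathcal H}(y)$ equals the coefficient of $y^{\,n-k}$ in $\mathcal J_{P_{\mathcal H}}(1,y)$; in particular the coefficient of the linear term of $X_{\mathcal H}$ is the coefficient of the degree-$(n-1)$ term of $\mathcal J_{P_{\mathcal H}}(1,y)$, which by Theorem~\ref{main} equals $\sum_{e\in E}f(E\setminus\{e\})-(|E|-1)f(E)$.

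The second step is to evaluate the rank values appearing in that expression. Since $\mathcal H$ is connected, the hyperedges of $E$ cover all of $V$ and induce a single component, so $f(E)=|V|-1$. For a fixed $e\in E$, the hypothesis that $\mathcal H-e$ is connected says the sub-hypergraph with edge set $E\setminus\{e\}$ is connected on the vertex set $V$; a connected hypergraph has no isolated vertex (the trivial case $|V|=1$ aside), so $E\setminus\{e\}$ still covers every vertex of $V$ and induces one component, whence $f(E\setminus\{e\})=|V|-1$ for every $e\in E$.

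Finally I would substitute these values to get
\[
\sum_{e\in E}f(E\setminus\{e\})-(|E|-1)f(E)=|E|\,(|V|-1)-(|E|-1)(|V|-1)=|V|-1,
\]
which is precisely the asserted coefficient of the linear term of $X_{\mathcal H}(y)$.

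The only genuinely delicate point is the middle step: one must be sure that the hypothesis "$\mathcal H-e$ is connected'' forces $f(E\setminus\{e\})=|V|-1$ rather than a smaller value. This relies on the convention that connectivity of $\mathcal H-e$ is understood relative to the full vertex set $V$, so that deleting $e$ does not leave any vertex uncovered; once that is in place, the computation is an immediate instance of Theorem~\ref{main} combined with the standard correspondence between hypergraphical polymatroids and K\'alm\'an's exterior polynomial.
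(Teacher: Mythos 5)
Your proposal is correct and follows the same route as the paper: specialize Theorem \ref{main} to the hypergraphical polymatroid $P_{\mathcal H}$, identify the coefficient of the degree-$(n-1)$ term of $\mathcal J_{P_{\mathcal H}}(1,y)$ with the linear coefficient of $X_{\mathcal H}(y)$, and plug in $f(E)=|V|-1$ and $f(E\setminus\{e\})=|V|-1$. The only cosmetic difference is how you justify $f(E\setminus\{e\})=|V|-1$: you compute it directly from K\'alm\'an's $\mu$-function formula, while the paper observes that connectivity of $\mathcal H-e$ yields a spanning tree $\tau$ of $Bip\,\mathcal H$ with $d_\tau(e)=1$; these are equivalent reformulations, and your concern about the convention for connectivity of $\mathcal H-e$ is exactly the point the spanning-tree phrasing implicitly resolves.
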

\begin{proof}
Let $P_{\mathcal{H}}$ be a hypergraphical polymatroid, with rank function $f$,  corresponding to the hypergraph $\mathcal{H}$. It is clear that $f(E)=|V|-1$ and $n=|E|$. Let $Bip \mathcal{H}$ be the bipartite graph associated to  $\mathcal{H}$. If $\mathcal{H}-e$ is connected for each $e\in E$, then there exists a spanning tree $\tau$ of $Bip \mathcal{H}$ with the degree of $e$ in $\tau$, $d_{\tau}(e)=1$, that is, $f(E\setminus \{e\})=|V|-1$. It implies that $\sum_{e\in E} f(E\setminus \{e\})=|E|(|V|-1)$. Then $\sum_{e\in E} f(E\setminus \{e\})-(|E|-1)f(E)=|V|-1$.
\end{proof}

\section{Concluding remarks}
\noindent

Let $P\subset \mathbb{Z}_{\geq}^{3}$ be a polymatroid. The rank function $f:2^{[n]}\rightarrow \mathbb{Z}_{\geq}$ of the polymatroid $P$ is given by $f(\{1,2,3\})=f(\{1,2\})=f(\{1,3\})=f(\{2,3\})=3$, $f(\{1\})=f(\{2\})=2$, $f(\{3\})=1$, and $f(\emptyset)=0$. Then $\mathcal{J}_{P}(x,y)=(x+y-1)(y^{2}+y+2xy+x^{2})$. We have that $\mathcal{J}_{P}(x,\frac{1}{3})=x^{3}-\frac{8}{27}$,  $\mathcal{J}_{P}(0,y)=y^{3}-y$, and $\mathcal{J}_{P}(\frac{1+ \sqrt{13}}{6},y)=y^{3}+\frac{1+ \sqrt{13}}{2}y^{2}+\frac{-11+ \sqrt{13}}{54}$, that is, they are not interpolating. In this paper, we have showed  that $\mathcal{J}_{P}(x,t)$ and $\mathcal{J}_{P}(t,y)$ are  interpolating for any real number $t\geq 1$ (Theorem \ref{main}).  A natural open problem is to further explore the interpolating behavior of $\mathcal{J}_{P}(x,t)$ and $\mathcal{J}_{P}(t,y)$ in the case that $t<1$.

\section*{Acknowledgements}
\noindent
This work is supported by National Natural Science Foundation of China (No. 12171402).

\section*{References}
\bibliographystyle{model1b-num-names}
\bibliography{<your-bib-database>}

\end{document}